\documentclass[12pt]{article}
\usepackage{amsfonts}
\usepackage[all]{xy}
\newtheorem{lemma}{Lemma}
\newtheorem{theorem}{Theorem}
\newtheorem{proposition}{Proposition}
\newtheorem{Definition}{Definition}
\newenvironment{proof}{{\bf Proof}.}{\hfill $\Box$}

\begin{document}
\title{On the multi-dimensional Favard Lemma}
\author{Abdallah Dhahri\\\vspace{-2mm}\scriptsize Department of Mathematics \\\vspace{-2mm}\scriptsize Faculty of Sciences of Tunis \\\vspace{-2mm}\scriptsize University of Tunis El-Manar, 1060 Tunis, Tunisia\\\vspace{-2mm}\scriptsize abdallah.dhahri@fst.rnu.tn\\ \\
Ameur Dhahri\\\vspace{-2mm}\scriptsize Volterra Center, University of Roma Tor Vergata\\\vspace{-2mm}\scriptsize Via Columbia 2, 00133 Roma, Italy\\\scriptsize ameur@volterra.uniroma2.it}
\date{}
\maketitle
\begin{abstract}
We prove that the creator operators, on the $d$ commuting indeterminates polynomial algebra, are linearly independent. We further study the connection between the classical (one dimensional) and the multi-dimensional ($d$-dimensional, $d\geq 1$) Favard Lemmas. Moreover, we investigate the dependence of the Jacobi sequences on the linear change of basis of $\mathbb C^d$. Finally we prove that the Jacobi sequences associated to a product probability measure on $\mathbb{R}^d$, with finite moments of any order, are diagonal matrices in the basis introduced by the tensor product of the orthogonal polynomials of the factor measures (see page 21).
\end{abstract}
\section{Introduction}
The classical (or one-dimensional) Favard Lemma (cf \cite{ch1978}, \cite{Is-As}, \cite{sz1975}) states that to any probability measure $\mu$ on the real line with finite moments of any order it associated two sequences, called {\it Jacobi sequences} of $\mu$,
 $$\left( (w_n)_{n\in\mathbb{N}},\:\:(\alpha_n)_{n\in\mathbb{N}}\right),\:\:\:w_n\in\mathbb{R}_+,\;\alpha_n\in\mathbb{R}$$
such that
$$w_n=0\Rightarrow w_{n+k}=0,\:\forall k\in\mathbb{N}$$
and conversely, given two such sequences, it is associated:
\begin{enumerate}
\item[(i)] a state, induced by a probability measure on $\mathbb{R}$, on the one indeterminate polynomial algebra $\mathcal{P}$,
\item[(ii)] an orthogonal decomposition of $\mathcal{P}$ canonically associated to this state.
\end{enumerate}

In the multi-dimensional case (cf \cite{DX},\cite{K1}, \cite{K2},\cite{X}) the formulations of these results are recently given by identifying the theory of multi-dimensional orthogonal polyomials with the theory of symmetric interacting Fock spaces (cf \cite{AcBaDh}). The multi-dimensional analogue of positive numbers $w_n$ (resp. real numbers $\alpha_n$) are the positive definite matrices (resp. Hermitean matrices).

In this paper we prove the injectivity and the linear independence properties of the creator operators. We show that the Jacobi sequences depend on the choice of basis of $\mathbb{C}^d$. We study the relation between the classical and $d$-dimensional Favard Lemmas in the case of $d=1$. Moreover, we prove that the Jacobi sequences, in the case of probability measure product, are diagonal matrices in the basis introduced by the tensor product of the orthogonal polynomials of the factor measures.

This paper is organized as follows. In section 2, we recall from [1] the basic properties of the complex polynomial algebra in $d$ commuting indeterminates and  the multi-dimensional Favard Lemma. The dependence of the Jacobi sequences on the choice of a basis of $\mathbb{C}^d$ is treated in section 3. In section 4, we discuss the connection between the classical and multi-dimensional Favard Lemmas ($d=1$). Finally, in section 5 we give the explicit forms of the Jacobi sequences in the case of product probability measures on $\mathbb{R}^d$ with finite moments of any order.

\section{The multi-dimensional Favard Lemma}
In this section, we recall the basic properties of the polynomial algebra in $d$ commuting indeterminates and the multi-dimensional Favard Lemma. We refer the interested reader to \cite{AcBaDh} for more details.

\subsection{The polynomial algebra in $d$ commuting indeterminates}
Let $d\in \mathbb{N}^*=\mathbb{N}\ \{0\}$ and let
$$\mathcal{P}=\mathbb{C}[(X_j)_{1\leq j\leq d}]$$
be the complex polynomial $*$-algebra in the commuting indeterminates $(X_j)_{1\leq j\leq d}$ with the involution uniquely determined by the prescription that the $X_j$ are self-adjoint. For all $v=(v_1,\dots,v_d)\in\mathbb{C}^d$ denote
$$X_v:=\sum_{j=1}^dv_jX_j$$

A {\it monomial} of degree $n\in\mathbb{N}$ is by definition any product of the form
$$M:=\prod_{j=1}^d X_j^{n_j}$$
where, for any $1\leq j\leq d$, $n_j\in\mathbb{N}$ and $n_1+\dots+n_d=n$.

Denote by $\mathcal{P}_{n]}$ the vector subspace of $\mathcal{P}$ generated by the set of monomials of degree less or equal than $n$. It is clear that
$$\mathcal{P}=\bigcup_{n\in\mathbb{N}}\mathcal{P}_{n]}$$
\begin{Definition}
For $n\in\mathbb{N}$, a subspace $\mathcal{P}_n\subset\mathcal{P}_{n]}$ is monic of degree $n$ if
$$\mathcal{P}_{n]}=\mathcal{P}_{n-1]}\dot+\mathcal{P}_n$$
(with the convention $\mathcal{P}_{-1]}=\{0\}$ and where $\dot+$ means a vector space direct sum) and $\mathcal{P}_n$ has a linear basis $\mathcal{B}_n$ with the property that for each $b\in\mathcal{B}_n$, the highest order term of $b$ is a single multiple of a monomial of degree $n$. Such a basis is called a perturbation of the monomial basis of order $n$ in the coordinates $(X_j)_{1\leq j\leq d}$.
\end{Definition}

Note that any state $\varphi$ on $\mathcal{P}$ defines a pre-scalar product
\begin{eqnarray*}
\langle.,.\rangle_\varphi:\mathcal{P}\times\mathcal{P}&\rightarrow& \mathbb{C}\\
(a,b)&\mapsto&\langle a,b\rangle_\varphi=\varphi(a^*b)
\end{eqnarray*}
with $\langle1_\mathcal{P},1_\mathcal{P}\rangle_\varphi=1$, where $1_\mathcal{P}=1$.
\begin{lemma}\label{state}
Let $\varphi$ be a state on $\mathcal{P}$ and denote $\langle\ \cdot,\cdot \ \rangle=\langle\ \cdot,\cdot \ \rangle_\varphi$ be the associated pre-scalar product. Then there exists a unique gradation
\begin{equation}\label{phi-gradation}
\mathcal{P}=\bigoplus_{n\in\mathbb{N}}\mathcal{P}_{n,\varphi}
\end{equation}
called the $\varphi$-orthogonal polynomial decomposition of $\mathcal{P}$, with the following properties:
\begin{enumerate}
\item[(i)] (\ref{phi-gradation}) is orthogonal for the pre-scalar product $\langle\ \cdot,\cdot\ \rangle$ on $\mathcal{P}$, i.e.
\begin{eqnarray*}
\mathcal{P}_{m,\varphi}\bot\mathcal{P}_{n,\varphi}, \qquad \forall m\neq n
\end{eqnarray*}
\item[(ii)] (\ref{phi-gradation}) is compatible with the filtration $(\mathcal{P}_{n]})_n$ in the sense that
\begin{equation}\label{*}
\mathcal{P}_{n]}=\bigoplus_{h=0}^n\mathcal{P}_{h,\varphi},\qquad \forall n\in\mathbb{N},
\end{equation}
\item[(iii)] for each $n\in\mathbb{N}$ the space $\mathcal{P}_{n,\varphi}$ is monic.
\end{enumerate}

Conversely, let be given:
\begin{enumerate}
\item[(j)] a vector space direct sum decomposition of $\mathcal{P}$
\begin{equation}\label{direct-sum}
\mathcal{P}=\sum_{n\in\mathbb{N}}^{\cdot}\mathcal{P}_n
\end{equation}
is compatible with the filtration in the sens of (\ref{*}) and such that $\mathcal{P}_0=\mathbb{C}.1_\mathcal{P}$, and for each $n\in\mathbb{N}$, $\mathcal{P}_n$ is monic of degree $n$,
\item[(jj)] for all $n\in\mathbb{N}$ a pre-scalar product $\langle \ \cdot,\cdot \ \rangle_{n}$ on $\mathcal{P}_n$ with the property that $1_\mathcal{P}$ has norm $1$ and the unique pre-scalar product $\langle \ \cdot,\cdot \ \rangle$ on $\mathcal{P}$ defined by the conditions:
\begin{eqnarray*}
\langle\ \cdot,\cdot\ \rangle|_{\mathcal{P}_n}&=&\langle \ \cdot,\cdot \ \rangle_{\mathcal{P}_n},\qquad \forall n\in\mathbb{N}\\
\mathcal{P}_{m}&\bot&\mathcal{P}_{n}, \qquad \forall m\neq n
\end{eqnarray*}
is such that the multiplication operators by the coordinates $X_j$ ($1\leq j\leq d$) are $\langle\ \cdot,\cdot\ \rangle$-symmetric.\smallskip
\end{enumerate}
Then, there exists a state $\varphi$ on $\mathcal{P}$ such that the decomposition (\ref{direct-sum}) is the orthogonal polynomial decomposition of $\mathcal{P}$ with respect to $\varphi$.
\end{lemma}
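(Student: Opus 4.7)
The plan is to combine a Gram-Schmidt procedure along the filtration $(\mathcal{P}_{n]})$ with the $\langle\cdot,\cdot\rangle$-symmetry of the multiplication operators by the coordinates $X_j$.

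\textbf{Existence in the direct implication.} I would build $\mathcal{P}_{n,\varphi}$ inductively in $n$, starting with $\mathcal{P}_{0,\varphi} := \mathbb{C}\cdot 1_\mathcal{P}$. Let $\mathcal{M}_n$ denote the finite set of monomials of degree exactly $n$; for each $M\in\mathcal{M}_n$ set $\widetilde M := M - \pi_{n-1}(M)$, where $\pi_{n-1}$ is the $\langle\cdot,\cdot\rangle_\varphi$-orthogonal projection onto $\mathcal{P}_{n-1]}$. Since $\langle\cdot,\cdot\rangle_\varphi$ is only a pre-scalar product, the projection is computed modulo the null subspace $N := \{p\in\mathcal{P} : \langle p,p\rangle_\varphi = 0\}$, on which the induced form on $\mathcal{P}_{n]}/(\mathcal{P}_{n]}\cap N)$ is a genuine finite-dimensional scalar product. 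Setting $\mathcal{P}_{n,\varphi} := \mathrm{span}\{\widetilde M : M\in\mathcal{M}_n\}$ produces a monic subspace whose leading terms are precisely the elements of $\mathcal{M}_n$; properties (i), (ii) and (iii) are built into the construction.

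\textbf{Uniqueness.} If $\mathcal{P}'_{n,\varphi}$ is another subspace satisfying (i)--(iii), I would argue inductively: each monic basis element $\widetilde M'$ with leading monomial $M$ differs from $\widetilde M$ by an element of $\mathcal{P}_{n-1]}$ by (ii), and this difference is $\langle\cdot,\cdot\rangle_\varphi$-orthogonal to $\mathcal{P}_{n-1]}$ by (i), forcing it to lie in $N$. Combined with the direct-sum condition of monicity, this yields $\mathcal{P}'_{n,\varphi} = \mathcal{P}_{n,\varphi}$.

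\textbf{Converse implication.} Given the data (j), (jj), I would set $\varphi(p) := \langle 1_\mathcal{P}, p\rangle$; the normalisation in (jj) gives $\varphi(1_\mathcal{P}) = 1$. The key identity $\langle p,q\rangle = \varphi(p^*q)$ is verified on monomials: for $M = X_{j_1}\cdots X_{j_n}$, iterating the $\langle\cdot,\cdot\rangle$-symmetry of the $X_j$ yields
$$\langle M, q\rangle = \langle X_{j_2}\cdots X_{j_n}, X_{j_1}q\rangle = \cdots = \langle 1_\mathcal{P}, X_{j_n}\cdots X_{j_1}q\rangle = \varphi(M^*q),$$
using that the $X_j$ commute and are self-adjoint so $M^* = M$. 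Sesquilinear extension gives $\langle p,q\rangle = \varphi(p^*q)$ for all $p,q\in\mathcal{P}$; in particular $\varphi(p^*p) = \langle p,p\rangle \geq 0$, so $\varphi$ is a state, and by construction the given decomposition is $\varphi$-orthogonal and satisfies (i)--(iii), hence coincides with the one produced by the direct implication. The main obstacle throughout is the possible degeneracy of the pre-scalar product, which forbids a naive Gram-Schmidt; both the projection $\pi_{n-1}$ and the uniqueness step have to be handled modulo the null subspace $N$. Once this bookkeeping is in place the monic structure and the symmetry of the $X_j$ drive the rest of the argument.
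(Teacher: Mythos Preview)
The paper does not actually prove this lemma: Section~2 is explicitly a recall of material from reference~[1] (Accardi--Barhoumi--Dhahri), and the lemma is stated there without proof, with the reader referred to~[1] ``for more details.'' So there is no in-paper proof to compare against.

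That said, your approach is the natural one and almost certainly matches what~[1] does: Gram--Schmidt along the filtration $(\mathcal{P}_{n]})$ for existence, and, for the converse, recovering $\varphi$ via $\varphi(p):=\langle 1_\mathcal{P},p\rangle$ and then identifying $\langle p,q\rangle$ with $\varphi(p^*q)$ by repeatedly sliding coordinate multiplications across using their $\langle\cdot,\cdot\rangle$-symmetry. Both directions are correctly sketched, and the converse computation is clean.

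One point deserves more care. In your uniqueness step you correctly deduce that $\widetilde M'-\widetilde M$ lies in $N\cap\mathcal{P}_{n-1]}$, but you then invoke ``the direct-sum condition of monicity'' to force $\mathcal{P}'_{n,\varphi}=\mathcal{P}_{n,\varphi}$ as subspaces of $\mathcal{P}$. Monicity alone does not kill nonzero null vectors: if $z\in N\cap\mathcal{P}_{n-1]}$ is nonzero, then $\widetilde M+z$ still has leading monomial $M$ and still spans a monic, $\langle\cdot,\cdot\rangle$-orthogonal complement of $\mathcal{P}_{n-1]}$. So either uniqueness is intended modulo $N$ (which is consistent with how the paper later uses the lemma, through the ``$\langle\cdot,\cdot\rangle$-orthogonal projector in the pre-Hilbert space sense'' it imports from~[1]), or an additional convention is needed to select a canonical lift. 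This is less a flaw in your strategy than an ambiguity in the statement that the present paper defers to~[1]; your flagging of the degenerate pre-scalar product as the main obstacle is exactly on target.
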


\subsection{The symmetric Jacobi relations and the CAP operators}
In the following, we fix a state $\varphi$ on $\mathcal{P}$ and we follow the notations of Lemma \ref{state} with the exception that we omit the index $\varphi$. We write $\langle.,.\rangle$ for the pre-scalar product $\langle.,.\rangle_\varphi$, $\mathcal{P}_k$ for the space $\mathcal{P}_{k,\varphi}$ and $P_{k]}:\mathcal{P}\rightarrow \mathcal{P}_{k]}$ for the $\langle.,.\rangle$-orthogonal projector in the pre-Hilbert space sense (see \cite{AcBaDh} for more details). Put
$$P_n=P_{n]}-P_{n-1]}$$
It is clear that $P_n=P_n^*$, $\sum_{n\geq0}P_n=1$ and $P_nP_m=\delta_{nm}P_n$ for all $n,m\in\mathbb{N}$. Moreover, for any $1\leq j\leq d$ and any $n\in\mathbb{N}$, one has
\begin{eqnarray}\label{Jacobi relation}
X_jP_n=P_{n+1}X_jP_n+P_nX_jP_n+P_{n-1}X_jP_n
\end{eqnarray}
with the convention that $P_{-1]}=0$. The identity (\ref{Jacobi relation}) is called the {\it symmetric Jacobi relation}.

Now, for each $1\leq j\leq d$ and $n\in\mathbb{N}$ we define the operators $a^\varepsilon_{j|n},$ $\varepsilon\in\{+,0,-\}$, as follows:
\begin{eqnarray}
a^{+}_{j|n}&:=&P_{n+1}X_jP_{n}\Big|_{{{\cal P}}_{n}}
\ : \ {{\cal P}}_{n}\longrightarrow {{\cal P}}_{n+1}\label{1df-a+j|n}\nonumber,\\
a^{0}_{j|n}&:=&P_{n}X_jP_{n}\Big|_{{{\cal P}}_{n}}
\ : \ {{\cal P}}_{n}\longrightarrow {{\cal P}}_{n}\label{cre-ann-con},\\
a^{-}_{j|n}&:=&P_{n-1}X_jP_{n}\Big|_{{{\cal P}}_{n}}
\ : \ {{\cal P}}_{n}\longrightarrow {\cal P}_{n-1}\label{1df-a-j|n}.\nonumber
\end{eqnarray}

Note that in this context, the sum
\begin{equation}\label{weak-sum}
{{\cal P}}=\bigoplus_{n\in{\mathbb N}}{{\cal P}}_{n}
\end{equation}
is orthogonal and meant in the weak sense, i.e. for each
element $Q\in {{\cal P}}$ there is a finite set
$I\subset{\mathbb N}$ such that
\begin{equation}\label{finite-decomposition}
    Q=\sum_{n\in I}p_n
\; ,\qquad p_n\in {{\cal P}}_{n}
\end{equation}

\begin{theorem}\label{th-Q--dec}
On ${{\cal P}}$, for any $1\leq j\leq d $, the following operators are well defined
\begin{eqnarray*}\label{1df-a+j}
a^{+}_j&:=&\sum_{n\in{\mathbb N}}a^{+}_{j|n}\\
a^{0}_j&:=&\sum_{n\in{\mathbb N}}a^{0}_{j|n}\\
a^{-}_j&:=&\sum_{n\in{\mathbb N}}a^{-}_{j|n}
\end{eqnarray*}
and one has
\begin{equation}\label{q-dec-Xj}
X_j=a^{+}_j+a^{0}_j+a^{-}_j
\end{equation}
in the sense that both sides of (\ref{q-dec-Xj})
are well defined on ${{\cal P}}$ and the equality holds.
\end{theorem}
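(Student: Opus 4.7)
The plan is to prove well-definedness of the three sums pointwise on $\mathcal{P}$ and then reduce the identity $X_j=a^+_j+a^0_j+a^-_j$ to the symmetric Jacobi relation \eqref{Jacobi relation} already recalled. The key observation is that, by \eqref{finite-decomposition}, any $Q\in\mathcal{P}$ admits a finite decomposition $Q=\sum_{n\in I}p_n$ with $p_n\in\mathcal{P}_n$ and $I\subset\mathbb{N}$ finite, so that the ``infinite'' sums defining $a^+_j$, $a^0_j$, $a^-_j$ are in fact finite when applied to $Q$.

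First I would show that the operators $a^\varepsilon_j=\sum_{n\in\mathbb{N}}a^\varepsilon_{j|n}$ ($\varepsilon\in\{+,0,-\}$) are well defined on $\mathcal{P}$. Fix $Q\in\mathcal{P}$ and write $Q=\sum_{n\in I}p_n$ as above. Since $a^\varepsilon_{j|n}$ is defined on $\mathcal{P}_n$ and vanishes on $\mathcal{P}_m$ for $m\neq n$ (by construction $a^\varepsilon_{j|n}=P_{n+\varepsilon}X_jP_n|_{\mathcal{P}_n}$ and $P_n|_{\mathcal{P}_m}=\delta_{nm}\,\mathrm{id}$), the sum
\begin{eqnarray*}
a^\varepsilon_j Q=\sum_{n\in\mathbb{N}}a^\varepsilon_{j|n}Q=\sum_{n\in I}a^\varepsilon_{j|n}p_n
\end{eqnarray*}
reduces to a finite sum in $\mathcal{P}$, so $a^\varepsilon_j$ is well defined as a linear map $\mathcal{P}\to\mathcal{P}$. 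In particular, $a^+_j$ increases, $a^0_j$ preserves, and $a^-_j$ decreases the degree by one.

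Next I would verify the identity \eqref{q-dec-Xj}. By linearity and the previous reduction, it suffices to check $X_jp_n=(a^+_{j|n}+a^0_{j|n}+a^-_{j|n})p_n$ for every $p_n\in\mathcal{P}_n$ and every $n\in\mathbb{N}$. Since $p_n=P_np_n$, the symmetric Jacobi relation \eqref{Jacobi relation} gives
\begin{eqnarray*}
X_jp_n=X_jP_np_n=P_{n+1}X_jP_np_n+P_nX_jP_np_n+P_{n-1}X_jP_np_n,
\end{eqnarray*}
which is exactly $a^+_{j|n}p_n+a^0_{j|n}p_n+a^-_{j|n}p_n$ by the definitions in \eqref{cre-ann-con}. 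Summing over the finite index set $I$ appearing in the decomposition of a generic $Q$ yields $X_jQ=a^+_jQ+a^0_jQ+a^-_jQ$.

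The only potentially delicate point is to make sure that applying $X_j$ to $p_n\in\mathcal{P}_n$ does not produce components in $\mathcal{P}_k$ for $k\le n-2$, which is implicit in the Jacobi relation \eqref{Jacobi relation}; if desired this can be recovered independently from the facts that $X_jp_n\in\mathcal{P}_{n+1]}=\bigoplus_{k=0}^{n+1}\mathcal{P}_k$ and that, by self-adjointness of $X_j$, $\langle q_k,X_jp_n\rangle=\langle X_jq_k,p_n\rangle=0$ whenever $q_k\in\mathcal{P}_k$ with $k+1<n$, since then $X_jq_k\in\mathcal{P}_{k+1]}\perp\mathcal{P}_n$. This is the only step requiring care; everything else is bookkeeping on finite sums.
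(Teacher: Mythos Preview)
Your argument is correct. The paper does not actually supply a proof of this theorem: it is stated in the preliminary Section~2 as a result recalled from \cite{AcBaDh}, so there is no ``paper's own proof'' to compare against. That said, your approach is exactly the natural one implicit in the surrounding text: use the finite decomposition \eqref{finite-decomposition} coming from the weak direct sum \eqref{weak-sum} to reduce the infinite sums to finite ones, and then invoke the symmetric Jacobi relation \eqref{Jacobi relation} componentwise.

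Two minor remarks. First, when you say that $a^\varepsilon_{j|n}$ ``vanishes on $\mathcal{P}_m$ for $m\neq n$'', strictly speaking $a^\varepsilon_{j|n}$ is only defined on $\mathcal{P}_n$ (it is the restriction $P_{n+\varepsilon}X_jP_n|_{\mathcal{P}_n}$); what you mean is the standard extension by zero to the other summands, which is how such direct sums of graded operators are always interpreted. Second, your final paragraph on the absence of components in $\mathcal{P}_k$ for $k\le n-2$ is superfluous here, since the Jacobi relation \eqref{Jacobi relation} is taken as given in the paper; moreover, in the pre-Hilbert setting the inference from $\langle q_k,X_jp_n\rangle=0$ for all $q_k$ to $P_k X_j p_n=0$ would need the additional observation that the projectors $P_k$ are well defined in the pre-Hilbert sense (as the paper notes, referring to \cite{AcBaDh}), rather than a non-degeneracy argument. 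None of this affects the validity of your proof.
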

Identity (\ref{q-dec-Xj}) is called the {\it quantum decomposition} of the variable $X_j$.
\begin{proposition}
For any $1\leq j\leq d$ and $n\in\mathbb N$, one has
\begin{eqnarray}\label{cap}
(a^{+}_{j|n})^{*} &=& a^{-}_{j|n+1}\qquad; \qquad
(a^{+}_{j})^{*} = a^{-}_{j}\label{a+j*=a-j}\nonumber\\
(a^{0}_{j|n})^{*} &=& a^{0}_{j|n}\qquad; \qquad
(a^0_{j})^*=a^0_{j}\label{a0j*=a0-j}
\end{eqnarray}
Moreover, for each $j,k\in\{1,\dots,d\}$, one has
$$[a^+_j,a^+_k]=0$$
\end{proposition}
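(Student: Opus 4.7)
The proof strategy is to unpack the definitions and use three structural facts: $X_j$ is symmetric with respect to $\langle\cdot,\cdot\rangle_\varphi$, the $P_{n]}$ (hence each $P_n$) are self-adjoint orthogonal projections with $P_mP_n=\delta_{mn}P_n$, and the $X_j$ commute as multiplication operators on $\mathcal{P}$.

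For the adjoint relations, I would fix $f\in\mathcal{P}_n$ and $g\in\mathcal{P}_{n+1}$, and compute
\[
\langle a^+_{j|n}f,g\rangle=\langle P_{n+1}X_jP_nf,g\rangle=\langle X_jf,P_{n+1}g\rangle=\langle X_jf,g\rangle=\langle f,X_jg\rangle,
\]
using $f=P_nf$, $P_{n+1}^*=P_{n+1}$, $g=P_{n+1}g$, and the symmetry of $X_j$. Re-inserting the projections on the right-hand side in the same way yields $\langle f,X_jg\rangle=\langle f,P_nX_jP_{n+1}g\rangle=\langle f,a^-_{j|n+1}g\rangle$, proving $(a^+_{j|n})^*=a^-_{j|n+1}$. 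The argument for $(a^0_{j|n})^*=a^0_{j|n}$ is the same but with both sides restricted to $\mathcal{P}_n$. The identities for the full operators $a^\varepsilon_j$ follow because, by definition, they act level-wise as $a^\varepsilon_{j|n}$ on $\mathcal{P}_n$ and the decomposition $\mathcal{P}=\bigoplus_n\mathcal{P}_n$ is orthogonal with the sums finite on each vector, so adjoints can be taken block-wise.

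For the commutation $[a^+_j,a^+_k]=0$, I would exploit the quantum decomposition (\ref{q-dec-Xj}) together with $X_jX_k=X_kX_j$. On $\mathcal{P}_n$, expanding $X_j X_k=(a^+_j+a^0_j+a^-_j)(a^+_k+a^0_k+a^-_k)$ and projecting onto $\mathcal{P}_{n+2}$ kills every term that is not of creator-creator type, because each $a^\varepsilon_{j|m}$ maps $\mathcal{P}_m$ into $\mathcal{P}_{m+\varepsilon}$ and only $a^+_{j|n+1}a^+_{k|n}$ increases the degree by $2$. Thus $P_{n+2}X_jX_k|_{\mathcal{P}_n}=a^+_{j|n+1}a^+_{k|n}$, and symmetrically $P_{n+2}X_kX_j|_{\mathcal{P}_n}=a^+_{k|n+1}a^+_{j|n}$. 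Commutativity of $X_j$ and $X_k$ then gives equality on each $\mathcal{P}_n$, hence on $\mathcal{P}$.

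There is no real obstacle here: once one commits to working level-wise and uses the self-adjointness of $X_j$ and of the projections $P_n$, everything reduces to careful bookkeeping. The only mild subtlety is to justify that the formal infinite sums $a^\varepsilon_j=\sum_n a^\varepsilon_{j|n}$ behave well under adjoints and composition, which is immediate because on any fixed polynomial $Q$ the decomposition (\ref{finite-decomposition}) is finite, so all the identities to be proved reduce to identities between finitely many block operators.
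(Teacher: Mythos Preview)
Your argument is correct and is exactly the standard one: self-adjointness of $X_j$ and of the projections $P_n$ immediately gives the adjoint identities, and the commutator relation follows by reading off the $\mathcal{P}_{n+2}$-component of $X_jX_k|_{\mathcal{P}_n}=X_kX_j|_{\mathcal{P}_n}$ through the quantum decomposition. Note, however, that the paper itself does not supply a proof of this proposition: it appears in Section~2, which merely recalls background from \cite{AcBaDh}, so there is no in-paper argument to compare against; your proof is precisely the natural one that reference would contain.
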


Let $V$ be a complex Hilbert vector space of dimension $d$. Given a basis $(e_j)_{1\leq j\leq d}$ of $V$, the operators (\ref{cap}) allow to define two complex linear maps
$$a^{\varepsilon}:\;v=\sum_{j=1}^db_je_j\in (V,\:(e_j)_{1\leq j\leq d})\rightarrow a^\varepsilon_v=\sum_{j=1}^db_ja_j^\varepsilon \in\mathcal{L}(\mathcal{P}),$$
where $\varepsilon\in\{0,+\}$ and $\mathcal{L}(\mathcal{P})$ is the vector space of the linear maps of $\mathcal{P}$ into itself. Then, one defines
$$a^-_v:=(a^+_v)^*.$$
In particular, we have
$$a^\eta_{e_j}=a^\eta_j,$$
where $\eta\in\{+,0,-\}.$ For all $n\in\mathbb{N}$ and $v=\sum_{j=1}^db_je_j$, we denote
$$X_v:=\sum_{j=1}^db_jX_j.$$
Moreover, if $\eta\in\{+,0,-\}$, we denote
$$a^\eta_v\Big|_{\mathcal{P}_n}:=a^\eta_{v|n}:=\sum_{j=1}^db_ja^\eta_{j|n},$$
it is clear that $a^\eta_v=\sum_{n\geq0}a^\varepsilon_{v|n}$.

\subsection{The  multi-dimensional Favard Lemma}

Recall that

$$\mathcal{P}_n:=\{a^+_{v|n}(\mathcal{P}_{n-1});\:\:v\in V\}.$$


Now, denote $\otimes$ the algebraic tensor product and $\hat{\otimes}$ its symmetrization. The tensor algebra over $V$ is the vector space
$$\mathcal{T}(V):=\sum^{\cdot}_{n\in\mathbb{N}}V^{\otimes n}$$
with multiplication given by
$$(u_n\otimes\dots\otimes u_1)\otimes(v_n\otimes\dots\otimes v_1):=u_n\otimes\dots\otimes u_1\otimes v_n\otimes\dots\otimes v_1$$
for all $n,m\in\mathbb{N}$ and all $u_j,v_j\in V$. The $*$-sub-algebra of $\mathcal{T}(V)$ generated by the elements of the form
$$v^{\otimes n}:=v\otimes\dots\otimes v\:(n-times),\:\forall n\in\mathbb{N},\;\forall v\in V$$
is called the {\it symmetric tensor algebra} over $V$ and denoted $\mathcal{T}_{sym}(V)$.

\begin{lemma}\label{id-symm-tens}

For all $n\in\mathbb{N}^*$, let ${\cal P}_{n}$ be defined as above. Denoting, for $v\in V$, $a^+_v:=\sum_{n\in\mathbb{N}}a^+_{v|k}$ and $\Phi=1_\mathcal{P}$. Then the map
\begin{equation}\label{symm-tens-vn-a+vn}
 U_n:\;v_n\hat\otimes v_{n-1}\hat\otimes\cdots \hat\otimes v_{1}
 \in V^{\hat \otimes n} \ \longmapsto \
a^+_{v_n}a^+_{v_{n-1}} \cdots a^+_{v_{1}}\Phi
\in {\cal P}_{n},
\end{equation}
extends uniquely to a vector space isomorphism with the property that for all $v\in V$ and $\xi_{n-1}\in V^{\hat \otimes (n-1)}$
$$U_n(v\hat\otimes\xi_{n-1})=a^+_{v}U_{n-1}\xi_{n-1}$$

For $n=0$ we put
$$U_0:z\in\mathbb{C}:=V^{\hat\otimes0}\longmapsto U_0(z):=z\in\mathbb{C}1_\mathcal{P}\in\mathcal{P}_0.$$
\end{lemma}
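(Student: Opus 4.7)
The plan is to build $U_n$ in three steps: (i) verify that the assignment on decomposable symmetric tensors is well defined and descends to a unique linear map on $V^{\hat\otimes n}$, (ii) prove surjectivity by induction on $n$, (iii) prove injectivity by comparing the dimensions of $V^{\hat\otimes n}$ and $\mathcal{P}_n$. The recursive identity $U_n(v \hat\otimes \xi_{n-1}) = a^+_v U_{n-1}(\xi_{n-1})$ then follows by linearity from its tautological validity on decomposable tensors.

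For step (i) I start from the map $(v_1,\dots,v_n)\in V^n \longmapsto a^+_{v_n} a^+_{v_{n-1}} \cdots a^+_{v_1}\Phi \in \mathcal{P}_n$. Multilinearity is built into the definition of $a^+_v$ as a linear function of $v$. Symmetry in the arguments reduces, via linearity, to the commutation $[a^+_j,a^+_k]=0$ established in the preceding proposition: any two neighboring creators can be transposed, hence the product is unchanged under any permutation of the $v_i$'s. The universal property of the symmetric tensor power then produces the unique linear extension $U_n:V^{\hat\otimes n}\to\mathcal{P}_n$.

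For surjectivity I proceed by induction, the case $n=0$ being $U_0(z)=z\Phi$. At stage $n$, the recalled identification of $\mathcal{P}_n$ as the linear span of $\{a^+_{v|n-1}(\xi):v\in V,\,\xi\in\mathcal{P}_{n-1}\}$ reduces the task to hitting vectors of the form $a^+_v\xi$ with $\xi\in\mathcal{P}_{n-1}$; by inductive hypothesis $\xi=U_{n-1}(\eta)$ for some $\eta\in V^{\hat\otimes(n-1)}$, so $a^+_v\xi = U_n(v\hat\otimes\eta)$. For injectivity I compare dimensions: since $\mathcal{P}_n$ is monic of degree $n$ in $d$ commuting variables, $\dim\mathcal{P}_n = \dim\mathcal{P}_{n]}-\dim\mathcal{P}_{n-1]} = \binom{n+d}{d}-\binom{n+d-1}{d} = \binom{n+d-1}{d-1}$, which equals $\dim V^{\hat\otimes n}$ for $V$ of complex dimension $d$. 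A surjective linear map between finite-dimensional spaces of equal dimension is an isomorphism.

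The only potential obstacle is the recalled characterization of $\mathcal{P}_n$ invoked in the surjectivity step; if one wishes to derive it rather than import it, one notes from the symmetric Jacobi relation (\ref{Jacobi relation}) and from $a^+_{v|n-1}=P_n X_v|_{\mathcal{P}_{n-1}}$ that $a^+_v(\mathcal{P}_{n-1})$ is exactly the $\mathcal{P}_n$-component of $X_v\mathcal{P}_{n-1}$, and then uses monicity of $\mathcal{P}_n$ together with $\mathcal{P}_{n]}=\mathcal{P}_{n-1]}+\sum_{j}X_j\mathcal{P}_{n-1]}$ to conclude that these components span $\mathcal{P}_n$. Everything else reduces to routine multilinear algebra.
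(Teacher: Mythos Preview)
The paper does not actually supply a proof of this lemma; it is stated in Section~2 as background material recalled from \cite{AcBaDh}, to which the reader is referred for details. So there is no in-paper argument against which to compare your proposal.

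That said, your argument is sound and would serve as a self-contained proof. The three ingredients are all correct: the commutation $[a^+_j,a^+_k]=0$ (already recorded in the paper) gives symmetry of the multilinear map, hence the factorization through $V^{\hat\otimes n}$; the inductive surjectivity step uses precisely the identification $\mathcal{P}_n=\{a^+_{v}(\mathcal{P}_{n-1}):v\in V\}$ that the paper displays just before the lemma; and the dimension count $\dim\mathcal{P}_n=\binom{n+d-1}{d-1}=\dim V^{\hat\otimes n}$ is exactly the one the paper itself records later in Lemma~3. Your final remark, explaining how to recover the spanning property of the $a^+_v(\mathcal{P}_{n-1})$ from the Jacobi relation and monicity rather than importing it, is a useful addendum and is correct. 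The only cosmetic point is the index: the paper writes $a^+_{v|n}(\mathcal{P}_{n-1})$ where, by its own conventions in (\ref{cre-ann-con}), the restriction acting on $\mathcal{P}_{n-1}$ should be $a^+_{v|n-1}$; your version uses the correct subscript.
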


The multi-dimensional Favard Lemma is given by the following theorem.
\begin{theorem}\label{multi}
Let $\mu$ be a probability measure on ${\mathbb R}^d$
with finite moments of all orders and denote $\varphi$ the state on $\mathcal{P}$ given by
$$\varphi(b)=\int_{\mathbb{R}^d}b(x_1,\dots,x_d)d\mu(x_1,\dots,x_d),\:b\in\mathcal{P}$$
Then, there exist two sequences
$$
(\Omega_n)_{n\in{\mathbb N}}\qquad ;\qquad
(\alpha_{.|n})_{n\in{\mathbb N}}
$$
satisfying:
\begin{enumerate}
\item[(i)] for all $n\in{\mathbb N}, \Omega_n$ is a linear operator on $V^{\hat\otimes n}$ positive and symmetric with respect to the tensor scalar product given by
\begin{equation}\label{scalar}
\langle u^{\otimes n},v^{\otimes m}\rangle_{V^{\hat{\otimes}n}}:=\delta_{m,n}\langle u,v\rangle_{V}^n;\:\:
\forall u,v\in V; \forall n\in{\mathbb N}
\end{equation}
where $\langle\ \cdot,\cdot\ \rangle_{V}$ is a scalar product on $V$.
\item[(ii)] denoting for all $n\in\mathbb{N}$
\begin{equation}\label{6.1}
\langle\xi_n,\eta_n\rangle_n:=\langle\xi_n,\Omega_n\eta_n\rangle_{V^{\hat{\otimes}n}};\:\:\:\xi_n,\eta_n\in V^{\hat\otimes n}
\end{equation}
the pre-scalar product on $V^{\hat{\otimes}n}$ defined by $\Omega_n$ and $| \ \cdot,\cdot \ |_n$ the associated pre-norm. For all $n\in\mathbb{N},v\in V$ and $\eta_{n-1}\in V^{\hat{\otimes}(n-1)}$, one has
$$|\eta_{n-1}|_{n-1}=0\Rightarrow |v\hat\otimes \eta_{n-1}|_n=0$$

\item[(iii)] for all $n\in{\mathbb N},$
$$\alpha_{.|n} \ : \ v\in V \ \to \ \alpha_{v|n}\in\mathcal{L}\Big(V^{\hat{\otimes}n}\Big)
$$
is a linear map and for all $v\in V$ such that $v=\bar{v}$, $\alpha_{v|n}$ is
a linear operator on $V^{\hat{\otimes}n}$,
symmetric for the pre-scalar product $\langle \ \cdot  ,  \cdot \ \rangle_n$ on $V^{\hat{\otimes}n}$;
\item[(iv)] the sequence $\Omega_n$ defines a symmetric interacting Fock space struture over $V$ endowed with the scalar product $\langle\,.\:,\:.\,\rangle_V$ and the operator
\begin{equation}\label{gradation-preserving}
\!\!\!\!\!\!\!\!\!\!\!\!\!U:=\bigoplus_{k\in\mathbb{N}}U_k:\bigoplus_{k\in\mathbb{N}}\left(
V^{\hat{\otimes}k} ,  \ \langle  \cdot  , \cdot  \rangle_k\right)\rightarrow\bigoplus_{k\in\mathbb{N}}\left(\mathcal{P}_k ,  \ \langle  \cdot  , \cdot  \rangle_{\mathcal{P}_k}\right)=\left(\mathcal{P},\ \langle  \cdot  , \cdot \rangle\right)
\end{equation}
is an orthogonal gradation preserving unitary isomorphism of pre-Hilbert spaces, where $\langle  \cdot  , \cdot  \rangle_{\mathcal{P}_k}$ is the pre-scalar product induced by $\varphi$ on $\mathcal{P}_k$.
\end{enumerate}
Moreover, denoting
\begin{eqnarray}\label{FAv-IFS}
\Gamma\left(V,\;(\Omega_n)_n\right)
:=
\bigoplus_{n\in{\mathbb N}}\left(
(V^{\hat{\otimes}n} \ ,  \
\langle \ \cdot  ,  \cdot \
\rangle_n\right)
\end{eqnarray}
the symmetric interacting Fock space defined by the sequence
$(\Omega_n)_{n\in{\mathbb N}}$, $A^\pm$ the creation and annihilation fields associated to it, $P_{\Gamma,n}$ the projection onto the $n-th$ space of the gradation (\ref{FAv-IFS}), and $N$ the number operator associated to this gradation i.e.
$$N:=\sum_{n\in\mathbb{N}}nP_{\Gamma,n},$$
the gradation preserving unitary pre-Hilbert space isomorphism (\ref{gradation-preserving}) satisfies
\begin{eqnarray*}
U\Phi&=&1_\mathcal{P}\\
U^{-1}X_vU&=&A^{+}_v+\alpha_{v,N}+A^{-}_v,\:\:\forall v=\bar{v}\in V,
\end{eqnarray*}
where $\alpha_{v,N}$ is the symmetric operator defined by
$$\alpha_{v,N}:=\sum_{n\in\mathbb{N}}\alpha_{v|n}P_{\Gamma,n}.$$

Conversely, given two sequences $(\Omega_n)_{n\in{\mathbb N}}$
and $(\alpha_{.|n})_{n\in{\mathbb N}}$ satisfying (i), (ii), (iii) and (iv) above, there exists a state $\varphi$ on $\mathcal{P}$, such that for any probability measure $\mu$ on ${\mathbb R}^d$, inducing the state $\varphi$ on $\mathcal{P}$, the pair of sequences\\$\left((\Omega_n)_{n\in\mathbb{N}},\:(\alpha_{.|n})_{n\in\mathbb{N}}\right)$ is the one associated to $\mu$ according to the first part of the theorem.
\end{theorem}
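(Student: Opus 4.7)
The plan is to construct the sequences $(\Omega_n)_n$ and $(\alpha_{\cdot|n})_n$ by transporting the orthogonal polynomial structure of $\mathcal{P}$ through the maps $U_n$ of Lemma~\ref{id-symm-tens}, and then to read off the stated relations block by block on each summand of the gradation.

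First I would apply Lemma~\ref{state} to the state $\varphi$ induced by $\mu$, obtaining the gradation $\mathcal{P}=\bigoplus_n\mathcal{P}_n$, the projectors $P_n$, and the CAP operators $a^\pm_j$, $a^0_j$, together with the quantum decomposition (\ref{q-dec-Xj}). Lemma~\ref{id-symm-tens} gives for each $n$ a linear isomorphism $U_n:V^{\hat\otimes n}\to\mathcal{P}_n$ that intertwines symmetric tensoring on the left by $v$ with $a^+_v$. I would then pull the pre-scalar product $\langle\cdot,\cdot\rangle_{\mathcal{P}_n}$ back along $U_n$ to a sesquilinear form on $V^{\hat\otimes n}$, and define $\Omega_n$ as the unique linear operator satisfying
\[\langle\xi,\Omega_n\eta\rangle_{V^{\hat\otimes n}}:=\langle U_n\xi,U_n\eta\rangle_{\mathcal{P}_n},\qquad\xi,\eta\in V^{\hat\otimes n},\]
using non-degeneracy of the tensor scalar product (\ref{scalar}). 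Hermiticity and positivity of $\Omega_n$ are inherited from $\langle\cdot,\cdot\rangle_{\mathcal{P}_n}$; property~(ii) follows because $U_n(v\hat\otimes\eta_{n-1})=a^+_vU_{n-1}\eta_{n-1}$ and $a^+_v$ is bounded on any finite-dimensional subspace, so a null vector in $\mathcal{P}_{n-1}$ is sent to a null vector in $\mathcal{P}_n$ (Cauchy--Schwarz on $\langle a^+_v\xi,a^+_v\xi\rangle=\langle\xi,a^-_va^+_v\xi\rangle$). The map $\alpha_{v|n}$ is defined as $\alpha_{v|n}:=U_n^{-1}a^0_{v|n}U_n$, and its symmetry for $\langle\cdot,\cdot\rangle_n$ when $v=\bar v$ is exactly the symmetry of $a^0_{v|n}$ coming from the Proposition.

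With these definitions in hand, $U:=\bigoplus_n U_n$ is by construction a gradation-preserving pre-Hilbert isomorphism, which gives~(iv). The intertwining relation $U^{-1}X_vU=A^+_v+\alpha_{v,N}+A^-_v$ is checked summand by summand: restricted to $\mathcal{P}_n$, the quantum decomposition (\ref{q-dec-Xj}) expresses $X_v$ as $a^+_{v|n}+a^0_{v|n}+a^-_{v|n}$; the first term corresponds via $U$ to $A^+_v$ by Lemma~\ref{id-symm-tens}, the middle term to $\alpha_{v,N}P_{\Gamma,n}$ by the definition of $\alpha_{v|n}$, and the lowering term to $A^-_v$ because taking adjoints with respect to the $\Omega_n$-weighted inner products is compatible with $(a^+_v)^*=a^-_v$. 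For the converse, given $(\Omega_n,\alpha_{\cdot|n})$ obeying (i)--(iv), I would build $\Gamma(V,(\Omega_n)_n)$, form the symmetric operator $T_v:=A^+_v+\alpha_{v,N}+A^-_v$ for each $v=\bar v$, and use the commutativity $[A^+_v,A^+_w]=0$ (built into the symmetric tensor algebra and matching $[a^+_j,a^+_k]=0$) to extend $v\mapsto T_v$ to an algebra homomorphism $\pi:\mathcal{P}\to\mathcal{L}(\Gamma)$. The desired state is $\varphi(b):=\langle\Phi,\pi(b)\Phi\rangle$, which is positive thanks to (i), and the converse direction of Lemma~\ref{state} identifies the $\varphi$-orthogonal decomposition of $\mathcal{P}$ with the Fock gradation, so the sequences it produces coincide with the given ones.

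The main obstacle I expect is the rigorous verification that $\Omega_n$ together with property~(ii) makes the creator action well-defined at the level of the pre-Hilbert quotients, i.e.\ that nullity of $\eta_{n-1}$ propagates to nullity of $v\hat\otimes\eta_{n-1}$ in a form strong enough to define $A^+_v$ without ambiguity on the Fock space side of the converse. A secondary subtlety is the bookkeeping needed to show that $\pi$ defined via the $T_v$'s factors through the commutativity relations of $\mathcal{P}$; this must be done using only the symmetry of the tensor product and relation~(iv), without tacitly assuming the state $\varphi$ one is trying to construct.
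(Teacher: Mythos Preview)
The paper does not actually prove this theorem: it is recalled from \cite{AcBaDh}, and the only trace of the argument in the present paper is the Remark after the statement recording that $\alpha_{\cdot|n}=U_n^{-1}a^0_{\cdot|n}U_n$. Your proposal follows precisely this line---pull back the $\varphi$-induced pre-scalar product along the isomorphisms $U_n$ of Lemma~\ref{id-symm-tens} to define $\Omega_n$, and conjugate $a^0_{v|n}$ by $U_n$ to define $\alpha_{v|n}$---so it is consistent with the construction hinted at here and carried out in \cite{AcBaDh}. As a sketch it is sound; the two points you flag (well-definedness of $A^+_v$ modulo the $\Omega_n$-null spaces, and the verification that the $T_v$'s generate a commutative representation of $\mathcal{P}$) are exactly the places where the cited reference does the real work, so there is nothing to correct in your outline.
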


{\bf Remark}: From the proof of the above theorem (cf \cite{AcBaDh}) one has
\begin{equation}\label{alphan}
\alpha_{.|n}=U_n^{-1}a^0_{.|n}U_n
\end{equation}
\begin{Definition}
The sequences $(\Omega_n)_n$ and $(\alpha_{.|n})_n$ in Theorem \ref{multi} are called Jacobi sequences associated to the probability measure $\mu$.
\end{Definition}

\section{Injectivity of the creator opertaors}
In this section we give some properties associated to the creator operators.
\begin{theorem}
If $v\neq 0_{V}$, the creator operator $a^+_v$ is an injective operator, i.e:
$$a_v^+\xi_n=0 \Rightarrow \xi_n=0$$
\end{theorem}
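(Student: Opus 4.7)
The plan is first to exploit the orthogonal gradation $\mathcal{P}=\bigoplus_{n\in\mathbb{N}}\mathcal{P}_n$ in order to reduce to the homogeneous case. Any $\xi\in\mathcal{P}$ decomposes as a finite sum $\xi=\sum_n \xi_n$ with $\xi_n\in\mathcal{P}_n$, and since $a^+_v$ maps $\mathcal{P}_n$ into $\mathcal{P}_{n+1}$, the condition $a^+_v\xi=0$ forces $a^+_v\xi_n=0$ for every $n$ by independence of the summands $\mathcal{P}_{n+1}$. So it suffices to prove the implication when $\xi_n$ lies in a single homogeneous component $\mathcal{P}_n$, which also matches the notation used in the theorem.

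Next I would transport the problem to the symmetric tensor algebra via the isomorphism $U_n:V^{\hat\otimes n}\to\mathcal{P}_n$ of Lemma \ref{id-symm-tens}. Writing $\xi_n=U_n(\eta_n)$ for a unique $\eta_n\in V^{\hat\otimes n}$ and using the intertwining relation $U_{n+1}(v\hat\otimes\eta_n)=a^+_v\,U_n(\eta_n)$ stated in that same lemma, the hypothesis $a^+_v\xi_n=0$ becomes $U_{n+1}(v\hat\otimes\eta_n)=0$. Since $U_{n+1}$ is a vector space isomorphism, this is equivalent to $v\hat\otimes\eta_n=0$. Thus the problem reduces to the purely algebraic assertion: for any $v\neq 0$ in $V$, the symmetric tensor multiplication $\eta\mapsto v\hat\otimes\eta$ is injective on $V^{\hat\otimes n}$.

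To establish this last point, I would choose a basis $(e_j)_{1\leq j\leq d}$ of $V$ with $e_1$ proportional to $v$, and identify the symmetric tensor algebra $\mathcal{T}_{sym}(V)=\bigoplus_{n\in\mathbb{N}}V^{\hat\otimes n}$ with the commutative polynomial algebra $\mathbb{C}[x_1,\dots,x_d]$ via $e_j\leftrightarrow x_j$. Under this identification, $V^{\hat\otimes n}$ corresponds to the space of homogeneous polynomials of degree $n$, and the operation $v\hat\otimes\,\cdot\,$ corresponds to multiplication by the nonzero linear form proportional to $x_1$. Since $\mathbb{C}[x_1,\dots,x_d]$ is an integral domain, multiplication by any nonzero element is injective, which yields $\eta_n=0$ and closes the argument.

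The expected obstacle is essentially notational rather than conceptual: one must verify that the $*$-subalgebra $\mathcal{T}_{sym}(V)$ of $\mathcal{T}(V)$ defined in the excerpt, equipped with the product $\hat\otimes$, is truly isomorphic as a graded algebra to $\mathbb{C}[x_1,\dots,x_d]$, so that ``multiplication by $v$'' on the tensor side coincides with ordinary multiplication by a linear polynomial. Once this dictionary is spelled out, the integral-domain argument finishes the proof, and one could alternatively replace it by the direct combinatorial observation that, in the basis $\{e^{\hat\otimes\alpha}:|\alpha|=n\}$ of $V^{\hat\otimes n}$, the operator $e_1\hat\otimes\,\cdot\,$ sends distinct basis vectors to (nonzero multiples of) distinct basis vectors of $V^{\hat\otimes(n+1)}$.
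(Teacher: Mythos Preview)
Your proof is correct and takes a genuinely different route from the paper's. The paper works directly in $\mathcal{P}$: it writes $\xi_n$ in the monomial basis, uses the quantum decomposition $a^+_v\xi_n=X_v\xi_n-a^0_v\xi_n-a^-_v\xi_n$ to isolate the degree-$(n{+}1)$ part of $X_v\xi_n$, and then runs a hands-on induction on the monomial coefficients to conclude that all top-degree coefficients of $\xi_n$ vanish---in effect re-proving by bare hands that multiplication by a nonzero linear form in $\mathbb{C}[X_1,\dots,X_d]$ has trivial kernel. You instead transport the problem through the isomorphism $U_n$ of Lemma~\ref{id-symm-tens} and reduce immediately to the injectivity of $v\hat\otimes\cdot$ on $V^{\hat\otimes n}$, which follows from the integral-domain property of $\mathbb{C}[x_1,\dots,x_d]$ (or the equivalent combinatorial observation you mention). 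Both arguments ultimately rest on the same algebraic fact, but yours makes this dependence transparent and leverages machinery the paper has already set up, yielding a much shorter proof; the paper's approach is more self-contained, avoiding any appeal to the symmetric-tensor identification, at the cost of a lengthy coefficient chase.
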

\begin{proof}
Let $v\neq 0_{V}=\sum_{j=1}^db_je_j$ and $\xi_n\in\mathcal{P}_n$ such that
\begin{equation}\label{eq1}
a_v^+\xi_n=0
\end{equation}
 Since $\xi_n\in\mathcal{P}_n$ then $\xi_n$ is of the form
$$\xi_n=\sum_{n_1+\dots+n_d\leq n}c_{n_1,\dots,n_d}X_1^{n_1}\dots X_d^{n_d}$$
From the quantum decomposition it follows that
\begin{eqnarray*}
a_v^+\xi_n=X_v\xi_n-a^0_v\xi_n-a^-_v\xi_n=0
\end{eqnarray*}
Hence one gets
\begin{eqnarray}\label{eq2}
&&X_v\sum_{n_1+\dots+n_d= n}c_{n_1,\dots,n_d}X_1^{n_1}\dots X_d^{n_d}+X_v\sum_{n_1+\dots+n_d\leq n-1}c_{n_1,\dots,n_d}X_1^{n_1}\dots X_d^{n_d}\nonumber\\
&&\:\:\:-a^0_v\xi_n-a^-_v\xi_n=0
\end{eqnarray}
Note that
\begin{eqnarray}
&&X_v\sum_{n_1+\dots+n_d= n}c_{n_1,\dots,n_d}X_1^{n_1}X_1^{n_1}\dots X_d^{n_d}\in\mathcal{P}_{n+1}\label{eq3}\\
&&X_v\sum_{n_1+\dots+n_d\leq n-1}c_{n_1,\dots,n_d}X_1^{n_1}X_1^{n_1}\dots X_d^{n_d}-a^0_v\xi_n-a^-_v\xi_n\in\mathcal{P}_{n]}\label{eq3'}
\end{eqnarray}
Therefore identities (\ref{eq2}), (\ref{eq3}) and (\ref{eq3'}) imply that
\begin{eqnarray}\label{eq4}
0&=&\sum_{1\leq j\leq d}\sum_{n_1+\dots+n_d= n}X_jX_1^{n_1}\dots X_d^{n_d}\Big(b_jc_{n_1,\dots,n_d}\Big)\nonumber\\
&=&\sum_{n_1+\dots+n_d=n}X_1^{n_1+1}X_2^{n_2}\dots X_d^{n_d}\Big(b_1c_{n_1,\dots,n_d}\Big)\nonumber\\
&&+\sum_{n_1+\dots+n_d=n}X_1^{n_1}X_2^{n_2+1}\dots X_d^{n_d}\Big(b_2c_{n_1,\dots,n_d}\Big)\\
&&\;\vdots\nonumber\\
&&+\sum_{n_1+\dots+n_d=n}X_1^{n_1}X_2^{n_2}\dots X_d^{n_d+1}\Big(b_dc_{n_1,\dots,n_d}\Big)\nonumber
\end{eqnarray}
 Note that $v\neq 0_{V}$. Then, there exists $i_0\in\{1,\dots,d\}$ such that $b_{i_0}\neq0$. Without loss of generality, suppose that $i_0=1$ and $b_1\neq0$. In the following by using the induction on the degree of the indeterminate $X_1$ in (\ref{eq4}), we will prove that for all $n_1,\dots,n_d$ such that $n_1+\dots+n_d=n$ one has $c_{n_1,\dots,n_d}=0$.
\begin{enumerate}
\item[(i)] From (\ref{eq4}) one has
$$0=b_1c_{n,0,\dots,0}$$
which is the coefficient of $X^{n+1}$ in (\ref{eq4}). This implies that $c_{n,0,\dots,0}=0$. In the same way, one has
$$0=b_1c_{n-1,0,\dots,0,1,0,\dots,0}+b_jc_{n,0,\dots,0}\:\:(j\geq2,\:n_j=1)$$
which is the coefficient of $X_1^nX_j$ in (\ref{eq4}). Since $c_{n,0,\dots,0}=0$ and $b_1\neq0$ then one has $c_{n-1,0,\dots,0,1,0,\dots,0}=0$. Similarly, one has
$$0=b_1c_{n-2,0,\dots,0,2,0,\dots,0}+b_jc_{n-1,0,\dots,1,0,\dots,0}\:\:(j\geq 2)$$
which is the coefficient of $X_1^{n-1}X_j^2$ in (\ref{eq4}). It follows that $c_{n-2,0,\dots,0,2,0,\dots,0}=0$.

Finally, using the same raisonment, we prove that
$$c_{n_1,0,\dots,0,n_j,0,\dots,0}=0$$
for all $n_1,n_j$ such that $n_1+n_j=n$.
\item[(ii)] Now, let $k\in\{2,\dots,d-1\}$ and suppose that $c_{n_1,0,\dots,0,n_{i_1},0,\dots,0,n_{i_k},0,\dots,0}=0$ for all $n_1,n_{i_1},\dots,n_{i_k}$ such that $n_1+n_{i_1}+\dots+n_{i_k}=n$ with $i_m\in\{2,\dots,d\}$ for all $m=1,\dots,k$. Note that, by identification of the coefficient of $X_1^{n-k}X_{i_1}\dots X_{i_k}X_{i_{k+1}}$ in (\ref{eq4}), one gets
\begin{eqnarray}\label{eq5}
0\!\!\!&=&\!\!\!b_1c_{n-k-1,0,\dots,0,1,0,\dots,0,1,0,\dots,0,1,0,\dots,0}\:\:(n_{i_m}=1 \mbox{ with } 1\leq m\leq k+1)\\
&&+b_{i_1}c_{n-k,0,\dots,0,1,0,\dots,0,1,0,\dots,0}\:\:(n_{i_1}=0,\;n_{i_m}=1 \mbox{ with } 2\leq m\leq k+1)\nonumber\\
&&\vdots\nonumber\\
&&+b_{i_{k+1}}c_{n-k,0,\dots,0,1,0,\dots,0,1,0,\dots,0}\:\:(n_{i_{k+1}}=0,\;n_{i_m}=1 \mbox{ with } 1\leq m\leq k)\nonumber
\end{eqnarray}
By using the induction assumption and the fact that $b_1\neq0$, identity (\ref{eq5}) implies that
 $$c_{n-k-1,0,\dots,0,1,0,\dots,0,1,0,\dots,0,1,0,\dots,0}=0\:\:(n_{i_m}=1 \mbox{ with } 1\leq m\leq k+1)$$
Similarly, by identification of the coefficient of $X_1^{n-k-2}X_{i_1}^2\dots X_{i_k}X_{i_{k+1}}$ in (\ref{eq4}), we show that
 $$c_{n-k-2,0,\dots,0,n_{i_1},0,\dots,0,n_{i_{k+1}},0,\dots,0}=0$$
where $n_{i_1}=2 \mbox{ and } n_{i_m}=1\mbox{ with } 2\leq m\leq k+1$. We continuate the same raisonment as above we show that
$$c_{n_1,0,\dots,0,n_{i_1},0,\dots,0,n_{i_{k+1}},0,\dots,0}=0$$
for all $n_1,n_{i_1},\dots,n_{i_{k+1}}$ such that $n_1+n_{i_1}+\dots+n_{i_{k+1}}=n$.
\end{enumerate}
This proves that $c_{n_1,\dots,n_d}=0$ for all $n_1,\dots,n_d$ such that $n_1+\dots+n_d=n$. Therefore $\xi_n$ is of the form
$$\xi_n=\sum_{n_1+\dots+n_d\leq n-1}c_{n_1,\dots,n_d}X_1^{n_1}\dots X_d^{n_d}$$
But $\xi_n\in\mathcal{P}_n$. It follows that $\xi_n=0$.
\end{proof}


\section{The Jacobi sequences and linear change of basis of $V$}
In this section we fix a state $\varphi$ and we denote $\langle \ \cdot,\cdot\ \rangle_{\mathcal{P}_n}$ the pre-scalar product induced by $\varphi$ on $\mathcal{P}_n$ (see Lemma \ref{state}). Let
$$\left( \left(\Omega_n\right)_n,\:\:\left(\alpha_{.|n}\right)_n\right)$$
be the associated Jacobi sequences (see Theorem \ref{multi}).

Let $e=(e_j)_{1\leq j\leq d}$ be a linear basis of $V$. Recall that for $\eta\in\{+,0,-\}$
\begin{equation} \label{id1}
a^\eta_{v,e}=\sum_{j=1}^ dv_ja_j^\eta,\:\;a^\eta_{e_j,e}=a^\eta_j,
\end{equation}
where $v=\sum_{j=1}^dv_je_j$. Let $e'=(e'_j)_{1\leq j\leq d}$ be another basis of $V$. Denote
\begin{equation}\label{id2}
a_{v,e'}^{\eta}=\sum_{j=1}^du_ja_j^{\eta},\:\;a_{e'_j}^\eta=a^\eta_j,
\end{equation}
 where $v=\sum_{j=1}^du_je'_j.$

Note that from (\ref{6.1}) one has
\begin{eqnarray}\label{scalar1}
\langle v_1 \hat{\otimes}\dots\hat{\otimes}v_n,w_1 \hat{\otimes}\dots\hat{\otimes}w_n\rangle_{n,e}&=&\langle a_{v_n,e}^+\dots a_{v_1,e}^+\Phi, a_{w_n,e}^+\dots a_{w_1,e}^+\Phi\rangle_{\mathcal{P}_n}\\
&=&\langle v_n \hat{\otimes}\dots\hat{\otimes}v_1, \Omega_{n}^{(e)}w_n \hat{\otimes}\dots\hat{\otimes}w_1\rangle_{V^{\hat{\otimes}n}}\nonumber
\end{eqnarray}
for all $v_j,\;w_j\in V$ ($1\leq j\leq n$), where the scalar product on $V^{\hat{\otimes}n}$ is given by $\langle u^{\otimes n},v^{\otimes m}\rangle_{V^{\hat{\otimes}n}}=\delta_{m,n}\langle u,v\rangle_{V}^n$, with $\langle\;.\:,\:.\;\rangle_{V}$ is a fixed scalar product on $V$.

Similarly with respect to the basis $e'$ of $V$ one has
\begin{eqnarray}
\langle v_n \hat{\otimes}\dots\hat{\otimes}v_1,w_{n} \hat{\otimes}\dots\hat{\otimes}w_1\rangle_{n,e'}\!\!&=&\!\!\!\langle a_{v_n,e'}^{+}\dots a_{v_1,e'}^{+}\Phi, a_{w_n,e'}^{+}\dots a_{w_1,e'}^{+}\Phi\rangle_{\mathcal{P}_n}\label{scalar2}\\
&=&\!\!\!\langle v_n \hat{\otimes}\dots\hat{\otimes}v_1, \Omega_{n}^{(e')}w_n \hat{\otimes}\dots\hat{\otimes}w_1\rangle_{V^{\hat{\otimes}n}}\nonumber
\end{eqnarray}
Recall that the operator $\alpha_{.|_n}^{(e)}:V\rightarrow \mathcal{L}(V^{\hat{\otimes} n})$ is defined by
$$\alpha_{j|_n}^{(e)}=\alpha_{.|_n}^{(e)}e_j=\alpha_{e_j|_n}^{(e)}=U_{n,e}^{-1}a^{0}_{j,e|n}U_{n,e}$$
for all $j\in\{1,\dots,d\}$ where
\begin{eqnarray*}
U_{n,e}:V^{\hat{\otimes} n}&\rightarrow&\mathcal{P}_n\\
v_n\hat{\otimes}\dots\hat{\otimes}v_1&\mapsto&a^{+}_{v_n,e}\dots a^+_{v_1,e}\Phi
\end{eqnarray*}
\begin{theorem} \label{theo-fix}For all $n\in\mathbb{N}^*$, we have
\begin{equation}
\Omega_{n}^{(e)}=(R^{\otimes n})^*\Omega_n^{(e')}R^{\otimes n},\:\:\alpha_{e'_j|_n}^{(e')}=R^{\otimes n}\alpha_{e_j|_n}^{(e)}(R^{\otimes n})^{-1},
\end{equation}
for all $j\in\{1,\dots,d\}$ where $R=Pass(e,e')$, which is defined by $Re_i=e'_i$ for all $1\leq i\leq d$.
\end{theorem}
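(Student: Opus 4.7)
The plan is to reduce both transformation formulas to a single comparison between the maps $U_{n,e}$ and $U_{n,e'}$. The key observation is that although the linear map $v\mapsto a^{+}_{v,\cdot}$ is basis-dependent (through the two identifications $e_j\leftrightarrow X_j$ and $e'_j\leftrightarrow X_j$), the CAP operators $a^{+}_1,\dots,a^{+}_d$ attached to the coordinate indeterminates $X_j$ are intrinsic objects. Combining (\ref{id1}) and (\ref{id2}), the basic identity I will establish is
$$a^{+}_{v,e'}=a^{+}_{R^{-1}v,\,e},\qquad\forall v\in V,$$
because if $v=\sum_j u_je'_j$ then $R^{-1}v=\sum_j u_je_j$, so the $e$-coordinates of $R^{-1}v$ coincide with the $e'$-coordinates of $v$, and both sides reduce to $\sum_j u_ja^{+}_j$. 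The same argument applies to $a^{0}$ and $a^{-}$.

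Applying this identity tensor factor by tensor factor in the defining formula (\ref{symm-tens-vn-a+vn}) and extending linearly, I obtain the pivotal intertwining
$$U_{n,e'}=U_{n,e}\circ(R^{\otimes n})^{-1},$$
equivalently $U_{n,e}=U_{n,e'}\circ R^{\otimes n}$. For the first assertion, I substitute this into the definition of $\Omega_n$: for all $\xi,\eta\in V^{\hat{\otimes}n}$, the chain of pre-scalar product identities
$$\langle\xi,\Omega_n^{(e)}\eta\rangle_{V^{\hat{\otimes}n}}=\langle U_{n,e}\xi,U_{n,e}\eta\rangle_{\mathcal{P}_n}=\langle U_{n,e'}R^{\otimes n}\xi,U_{n,e'}R^{\otimes n}\eta\rangle_{\mathcal{P}_n}=\langle\xi,(R^{\otimes n})^{*}\Omega_n^{(e')}R^{\otimes n}\eta\rangle_{V^{\hat{\otimes}n}}$$
yields $\Omega_n^{(e)}=(R^{\otimes n})^{*}\Omega_n^{(e')}R^{\otimes n}$ by the defining property of $\Omega_n$.

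For the second assertion I apply the remark formula (\ref{alphan}) in both frames, noting that the intrinsic operator appearing there is the same in either frame, since $a^{0}_{j,e}=a^{0}_j=a^{0}_{j,e'}$ by the conventions in (\ref{id1})--(\ref{id2}). Thus $\alpha_{e_j|n}^{(e)}=U_{n,e}^{-1}a^{0}_jU_{n,e}$ and $\alpha_{e'_j|n}^{(e')}=U_{n,e'}^{-1}a^{0}_jU_{n,e'}$, and substituting $U_{n,e'}=U_{n,e}(R^{\otimes n})^{-1}$ immediately produces $\alpha_{e'_j|n}^{(e')}=R^{\otimes n}\alpha_{e_j|n}^{(e)}(R^{\otimes n})^{-1}$.

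The only real difficulty I anticipate is the bookkeeping: keeping straight the direction of $R=\mathrm{Pass}(e,e')$ so that it sends $e$-coordinates to $e'$-coordinates correctly (equivalently, $R^{-1}$ sends $e'$-coordinates back to $e$-coordinates), and verifying that the adjoint $(R^{\otimes n})^{*}$ is taken with respect to the fixed tensor-product scalar product on $V^{\hat{\otimes}n}$, which is common to both formulations and does not itself depend on the chosen basis of $V$. Once the intertwining $U_{n,e}=U_{n,e'}\circ R^{\otimes n}$ is secured, the rest of the proof is a line or two of computation.
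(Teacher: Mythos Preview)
Your proposal is correct and follows essentially the same approach as the paper. The only organizational difference is that you isolate the intertwining relation $U_{n,e'}=U_{n,e}\circ(R^{\otimes n})^{-1}$ up front (via the clean identity $a^{+}_{v,e'}=a^{+}_{R^{-1}v,e}$) and then deduce both formulas from it, whereas the paper first proves the $\Omega_n$ identity by a direct comparison on basis vectors and only afterwards records the intertwining relation to handle $\alpha_{\cdot|n}$; the underlying content is identical.
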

\begin{proof}
Since the map
\begin{eqnarray*}
V^n&\rightarrow& V^{\hat{\otimes}n}\\
(v_1,\dots,v_n)&\mapsto& v_1\hat{\otimes}\dots\hat{\otimes}v_n
\end{eqnarray*}
 is $n$-linear, it is sufficient to prove the theorem by taking $v_j$ and $w_j$ elements of the basis $e'$.

Let $e'_{j_1},\dots,e'_{j_n};\;e'_{l_1},\dots,e'_{l_n}$ be $2n$ elements of the linear basis $e'$. Then, from (\ref{scalar2}), one has
\begin{eqnarray}\label{id3}
\langle e'_{j_n} \hat{\otimes}\dots\hat{\otimes}e'_{j_1}, \Omega_{n}^{(e')}e'_{l_n} \hat{\otimes}\dots\hat{\otimes}e'_{l_1}\rangle_{V^{\hat{\otimes}n}}=
\langle a_{j_n}^{+}\dots a_{j_1}^{+}\Phi, a_{l_n}^{+}\dots a_{l_1}^{+}\Phi\rangle_{\mathcal{P}_n}
\end{eqnarray}
But, for all $1\leq j\leq d$
\begin{equation}\label{e}
a^+_j=a_{e_j}^+.
\end{equation}
Thus, identities (\ref{scalar2}) and (\ref{e}) give
\begin{eqnarray}\label{cc1}
\langle e'_{j_n} \hat{\otimes}\dots\hat{\otimes}e'_{j_1}, \Omega_{n}^{(e')}e'_{l_n} \hat{\otimes}\dots\hat{\otimes}e'_{l_1}\rangle_{V^{\hat{\otimes}n}}&=&
\langle a_{j_n}^{+}\dots a_{j_1}^{+}\Phi, a_{l_n}^{+}\dots a_{l_1}^{+}\Phi\rangle_{\mathcal{P}_n}\\
&=&\langle e_{j_n} \hat{\otimes}\dots\hat{\otimes}e_{j_1}, \Omega_{n}^{(e)}e_{l_n} \hat{\otimes}\dots\hat{\otimes}e_{l_1}\rangle_{V^{\hat{\otimes}n}}\nonumber
\end{eqnarray}
Note that
\begin{equation}\label{cc2}
Re_j=e'_j,\:1\leq j\leq d.
\end{equation}
Finally, from (\ref{cc1}) and (\ref{cc2}), one gets

$$\Omega_{n}^{(e)}=(R^{\otimes n})^*\Omega_n^{(e')}R^{\otimes n}.$$

Now, for all $j_1,\dots j_n\in\{1,\dots,d\}$, note that
\begin{eqnarray*}
U_{n,e'}e'_{j_n}\hat{\otimes}\dots\hat{\otimes}e'_{j_1}&=&a^+_{j_n}\dots a^+_{j_1}\Phi\\
U_{n,e}(R^{\otimes n})^{-1}e'_{j_n}\hat{\otimes}\dots\hat{\otimes}e'_{j_1}&=&a^+_{j_n}\dots a^+_{j_1}\Phi
\end{eqnarray*}
This implies that
$$U_{n,e'}=U_{n,e}(R^{\otimes n})^{-1}.$$
Then, one has
\begin{eqnarray*}
\alpha_{e'_j|_n}^{(e')} e'_{j_n}\hat{\otimes}\dots\hat{\otimes}e'_{j_1}&=&U_{n,e'}^{-1}a^{0}_{e'_j|n}U_{n,e'}  e'_{j_n}\hat{\otimes}\dots\hat{\otimes}e'_{j_1} \\
&=&U_{n,e'}^{-1}a^{0}_{j|n}a^+_{j_n}\dots a^+_{j_1}\Phi\\
&=&R^{\otimes n}U_{n,e}^{-1}a^0_{e_j|n}U_{n,e}e_{j_n}\hat{\otimes}\dots\hat{\otimes}e_{j_1}\\
&=&R^{\otimes n}\alpha_{e_j|_n}^{(e)}(R^{\otimes n})^{-1} e'_{j_n}\hat{\otimes}\dots\hat{\otimes}e'_{j_1}
\end{eqnarray*}
This ends the proof.
\end{proof}

\section{Connection between the classical and multi-dimensional Favard Lemmas in the case of $d=1$}

Let $d=1$, $\mu$ be a probability measure on $\mathbb{R}$ with finite moments of any order. Let $(\alpha_n,\beta_n,P_n)_n$ be the classical Favard Lemma sequences associated to $\mu$. Then the Jacobi relation is given by
\begin{eqnarray*}
XP_n&=&\beta_nP_{n+1}+\alpha_n P_n+ \beta_{n-1}P_{n-1}\\
P_{-1}&=&0,
\end{eqnarray*}
where $(P_n)_n$ is a family of orthonormal polynomials (unitary) with respect to the scalar product induced by $\mu$.

In this case the CAP operators take the form
\begin{eqnarray*}
a^+_1{|_n}P_n&=&\beta_nP_{n+1}\\
a^-_1{|_n}P_n&=&\beta_{n-1}P_{n-1}\\
a^0_1{|_n}P_n&=&\alpha_n P_n
\end{eqnarray*}
It is clear that $(a^0_1|_n)^*=a^0_1|_n$ and $a_1^-|_n=(a^+_1|_{n-1})^*$ with respect to the scalar product induced by $\mu$ on $\mathcal{P}_{n]}$. Put $V=\mathbb{C}$ equipped with the canonical scalar product. If  $e=(e_1)$ and $e'=(e'_1)$ of $\mathbb{C}$ ($d=1$) and $Re_1=e'_1=ae_1$, then from Theorem \ref{theo-fix} the positive scalars $\Omega_n^{(e)}$ and $\Omega_n^{(e')}$ (because $d=1$) satisfy
$$\Omega_n^{(e)}=(R^*)^{\otimes n}\Omega_n^{(e')}R^{\otimes n}$$
But $R^*v=\bar{a}v$. This gives
$$\Omega_n^{(e)}=|a|^{2n}\Omega_n^{(e')}$$
Since $R^{-1}e'_1=\frac{1}{a}e_1$, it is clear that $\alpha_{e_1}=\alpha_{e'_1}$.


Now, suppose that $a^+_1=a_{e_1}^+$ where $e=(e_1)$ is the canonical basis of $\mathbb{C}$ and denote $\Omega_n^{(e)}=\Omega_n$. Then $(e_1^{\hat\otimes n})$ is a basis of $\mathbb C^{\hat\otimes n}\simeq \mathbb{C}$. Moreover, one has
\begin{eqnarray}\label{8}
\Omega_n&=&\langle e_1\hat{\otimes}\dots\hat{\otimes}e_1,\Omega_n e_1\hat{\otimes}\dots\hat{\otimes}e_1\rangle_{\mathbb{C}^{\hat{\otimes} n}}\nonumber\\
&=&\langle a^+_1\dots a^+_1\Phi, a^+_1\dots a^+_1\Phi\rangle_{\mathcal{P}_n}
\end{eqnarray}
Note that $\Phi=1_\mathcal{P}=1=P_0$. This yields
\begin{eqnarray*}
a^+_1\Phi&=&\beta_0P_1\\
a^+_1a^+_1\Phi&=&\beta_1\beta_0P_2\\
&\vdots&\\
(a^+_1)^n\Phi&=&\beta_{n-1}\dots \beta_1\beta_0P_n
\end{eqnarray*}
Using this and identity (\ref{8}) it follows that
$$\Omega_n=\beta_0^2\dots \beta_{n-1}^2$$



Now let $v_1,\dots,v_n\in\mathbb{C}$. We have
\begin{eqnarray*}
\alpha_{1|_n}v_1\hat{\otimes}\dots\hat{\otimes}v_n&=&(\alpha_{.|_n}e_1)v_1\hat{\otimes}\dots\hat{\otimes}v_n\\
&=&U_n^{-1}a^0_{1|n}U_nv_1\hat{\otimes}\dots\hat{\otimes}v_n\\
&=&U_n^{-1}a^0_{1|n}a^+_{v_n}\dots a^+_{v_1}\Phi\\
&=&(v_n\dots v_1)U_n^{-1}a^0_{1|n}a^+_{1}\dots a^+_{1}\Phi\\
&=&(v_n\dots v_1)(\beta_{n-1}\dots \beta_0)U_n^{-1}a^0_{1|n}P_n\\
&=&\alpha_n(v_n\dots v_1)(\beta_{n-1}\dots \beta_0)U_n^{-1}P_n\\
&=&\alpha_nv_1\hat{\otimes}\dots\hat{\otimes}v_n
\end{eqnarray*}
which proves that $\alpha_{1|_n}=\alpha_n$.
\section{Product  probability measures}
Recall that if $\varphi$ is a state on $\mathcal{P}=\mathbb{C}((X_j)_{1\leq j\leq d})$ and if we denote $\langle.,\,.\rangle_{\mathcal{P}_n}$ the pre-scalar product induced by $\varphi$ on $\mathcal{P}_n$ then one has
\begin{equation}\label{1}
\langle v_n\hat{\otimes}\dots\hat{\otimes}v_1,\Omega_n u_n\hat{\otimes}\dots\hat{\otimes}u_1\rangle_{V^{\hat{\otimes} n}}=\langle a^+_{v_n}\dots a^+_{v_1}\Phi,a^+_{u_n}\dots a^+_{u_1}\Phi\rangle_{\mathcal{P}_n}.
\end{equation}
In the following, we will take $V=\mathbb{C}^d$ equipped with the canonical scalar product.

Let $\mu_1,\dots,\mu_d$ be $d$ probability measures on $\mathbb{R}$ with finite moments of any order. By the classical Favard Lemma there exist sequences $(\alpha_{k,n},\beta_{k,n},P_{k,n})_n$ $(1\leq k\leq d)$ such that:
\begin{enumerate}
\item[(i)] $(\beta_{k,n})_n$ is a sequence of positive real scalars,
\item[ii)] $(\alpha_{k,n})_n$ is a sequence of real scalars,
\item[iii)] $(P_{k,n})_n$ is a family of orthonormal polynomials with respect to the scalar product induced by the probability measure $\mu_k$ which satisfies:
\begin{eqnarray}
XP_{k,n}&=&\beta_{k,n}P_{k,n+1}+\alpha_{k,n}P_{k,n}+\beta_{k,n-1}P_{k,n-1}\label{2}\\
P_{k,-1}&=&0,\:\:P_{k,0}=1\nonumber
\end{eqnarray}
\end{enumerate}

{\bf Remark}: Let
$$\mu=\mu_1\otimes\dots\otimes\mu_j\otimes\dots\otimes\mu_d$$
be the probability measure on $\mathbb{R}^d$. It is clear that
\begin{equation}\label{3}
P_{\bar{n}}=P_{1,n_1}\otimes\dots \otimes P_{j,n_j}\otimes\dots\otimes P_{d,n_d},
\end{equation}
where $\bar{n}= (n_1,\dots,n_d)$ and $j$ indicates the $j-th$ variable $X_j$, is a family of orthogonal polynomials with respect to the scalar product induced by $\mu$. Note that $P_{\bar{n}}$ are polynomials in $d$-variables $X_1,\dots,X_d$ and $P_{j,n_j}$ is understood as a polynomial acting only the $j-th$ variable $X_j$.

Define the CAP operators with respect to the canonical basis $(e_j)_{1\leq j\leq d}$ of $\mathbb{C}^d$ as follows:
\begin{eqnarray}\label{4}
a^+_{k,n}P_{k,n}&=&a^+_k|_n P_{k,n}=a^+_{e_k|n}P_{k,n} =\beta_{k,n}P_{k,n+1}\nonumber\\
a^-_{k,n}P_{k,n}&=&a^-_k|_nP_{k,n}=a^-_{e_k|n}P_{k,n} =\beta_{k,n-1}P_{k,n-1}\\
a^0_{k,n}P_{k,n}&=&a^0_k|_nP_{k,n}=a^0_{e_k|n}P_{k,n}=\alpha_{k,n}P_{k,n}\nonumber
\end{eqnarray}
It is straightforward to show that $a^0_{k,n}$ is a self-adjoint operator and $a^-_{k,n}=(a^+_{k,n-1})^*$ with respect to the scalar product induced by the probability measure $\mu_k$ on $\mathcal{P}_{k,n]}$ (it is sufficient to verify these identity on the orthonormal basis $(P_{k,n})_n$).\\

{\bf Remarks}: Note that

\begin{enumerate}
\item[1)] $a^\varepsilon_{k,n}=a^\varepsilon_{e_k,n}$
\item[2)] $a^\varepsilon_{k}=\sum_n a^\varepsilon_{k,n}$
\item[3)] The CAP operators (\ref{4}) act on the tensor product (\ref{3}) as follows:
$$a^\varepsilon_{k,n}=I\otimes \dots\otimes a^\varepsilon_{k,n}\otimes I\dots\otimes I$$
This means that $a^\varepsilon_{k,n}$ acts only in the $k$-variable $``X_k''$.
\end{enumerate}

In the following, our purpose is to give the explicit form of the Jacobi sequences associated to the probability measure $\mu$.

Define the equivalence relation $\mathcal{R}$ on $\{1,\dots,d\}^n$ by
$$(i_1,\dots,i_n)\mathcal{R}(j_1,\dots,j_n)$$
if and only if there exists a permutation $\pi\in S_n$ such that $i_k=j_{\pi(k)}$ for all $k\in\{1,\dots,n\}$. Denote the $R$-equivalence class of an element $(i_1,\dots,i_n)\in\{1,\dots,d\}^n$ by $cl(i_1,\dots,i_n)$. Put
\begin{eqnarray}
\mathcal{A}_n&:=&\{\bar{j}_n=cl\Big((j_1,\dots, j_n)\Big);\:j_k\in\{1,\dots,d\}\}\label{Am}\\
e_{\bar{j}_n}&:=&e_{j_1}\hat{\otimes}\dots\hat{\otimes}e_{j_n}\nonumber
\end{eqnarray}
\begin{lemma}
The family $\mathcal{B}=(e_{\bar{j}_n})_{\bar{j}_n\in\mathcal{A}_n}$ is a basis of $(\mathbb{C}^d)^{\hat{\otimes}n}$ and
\begin{eqnarray*}\label{card}
\dim(\mathbb{C}^d)^{\hat{\otimes}n}=\sharp \mathcal{A}_n=\left(
\begin{array}{lcc}
n+d-1\\
d-1
\end{array}
\right)
\end{eqnarray*}
\end{lemma}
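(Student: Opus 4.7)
The strategy is to first identify $\mathcal{A}_n$ with exponent vectors, then establish that $\mathcal{B}$ is a spanning set by multilinearity and symmetry, and finally pull linear independence back from $\mathcal{P}_n$ via the isomorphism $U_n$ of Lemma \ref{id-symm-tens}.

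First, I would observe that two sequences $(j_1,\dots,j_n)$ and $(l_1,\dots,l_n)$ in $\{1,\dots,d\}^n$ are $\mathcal{R}$-equivalent iff they contain each symbol $k\in\{1,\dots,d\}$ the same number of times. Thus the map
\[
cl(j_1,\dots,j_n)\ \longmapsto\ (n_1,\dots,n_d),\qquad n_k:=\#\{r:j_r=k\},
\]
is a bijection between $\mathcal{A}_n$ and $\{(n_1,\dots,n_d)\in\mathbb{N}^d:n_1+\cdots+n_d=n\}$. A standard stars-and-bars count gives $\sharp\mathcal{A}_n=\binom{n+d-1}{d-1}$, which will also be the dimension once $\mathcal{B}$ is shown to be a basis.

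Next, the spanning property. By definition $V^{\hat\otimes n}$ is generated by symmetrized simple tensors $v_1\hat\otimes\cdots\hat\otimes v_n$. Writing each $v_i=\sum_j v_{i,j}e_j$ and using multilinearity of $\hat\otimes$, every such tensor is a linear combination of $e_{j_1}\hat\otimes\cdots\hat\otimes e_{j_n}$ with $(j_1,\dots,j_n)\in\{1,\dots,d\}^n$. Because the symmetrization is invariant under permutations of factors, this tensor depends only on the class $cl(j_1,\dots,j_n)\in\mathcal{A}_n$, hence $\mathcal{B}$ spans $V^{\hat\otimes n}$.

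For linear independence, which is the main obstacle, I would transport the problem to $\mathcal{P}_n$ through the isomorphism $U_n$ of Lemma \ref{id-symm-tens}:
\[
U_n(e_{\bar j_n})=a^+_{j_n}a^+_{j_{n-1}}\cdots a^+_{j_1}\Phi\in\mathcal{P}_n.
\]
Using the quantum decomposition $X_j=a^+_j+a^0_j+a^-_j$ and induction on $n$, one shows that
\[
a^+_{j_n}\cdots a^+_{j_1}\Phi\ =\ X_{j_1}\cdots X_{j_n}\ +\ r,\qquad r\in\mathcal{P}_{n-1]},
\]
because on $\mathcal{P}_{k-1}$ the operator $a^+_{j_k}$ differs from multiplication by $X_{j_k}$ only by an element of $\mathcal{P}_{k-1]}$. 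Commutativity of the indeterminates then gives $X_{j_1}\cdots X_{j_n}=X_1^{n_1}\cdots X_d^{n_d}$, so distinct classes in $\mathcal{A}_n$ produce elements of $\mathcal{P}_n$ whose top-degree-$n$ parts are distinct monic monomials. Since the monomials $\{X_1^{n_1}\cdots X_d^{n_d}:n_1+\cdots+n_d=n\}$ are linearly independent in $\mathcal{P}_{n]}/\mathcal{P}_{n-1]}$, the family $(U_n(e_{\bar j_n}))_{\bar j_n\in\mathcal{A}_n}$ is linearly independent in $\mathcal{P}_n$, and applying $U_n^{-1}$ yields linear independence of $\mathcal{B}$. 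Combined with the spanning property, this proves $\mathcal{B}$ is a basis, and the dimension count follows from step one.
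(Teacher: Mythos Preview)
Your argument is correct, and in fact considerably more detailed than the paper's own proof, which simply asserts that ``it is easy to show that $\mathcal{B}$ is a basis'' and then cites the dimension formula for symmetric tensor powers as well known. So there is no meaningful comparison of method to make: the paper treats the result as standard, while you supply an actual proof.

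Two minor remarks on your write-up. First, in the linear independence step you invoke $U_n^{-1}$, but you do not need invertibility: once you have shown that the images $U_n(e_{\bar j_n})$ are linearly independent in $\mathcal{P}_n$, linearity of $U_n$ alone forces the $e_{\bar j_n}$ to be linearly independent (a linear map sends a dependence relation to a dependence relation). This is worth saying explicitly, since otherwise one might worry about circularity between Lemma~\ref{id-symm-tens} and the present lemma. Second, your route through $U_n$ and leading monomials is pleasant because it uses the paper's own machinery, but a shorter alternative is available once you have spanning and the stars-and-bars count: quoting the standard fact $\dim(\mathbb{C}^d)^{\hat\otimes n}=\binom{n+d-1}{d-1}$ (as the paper does) immediately shows that a spanning set of that cardinality is a basis, with no need to verify independence separately.
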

\begin{proof}
It is an easy to show that $\mathcal{B}$ is a basis of $(\mathbb{C}^d)^{\hat{\otimes}n}$. Moreover, it is well known that
$$\dim(\mathbb{C}^d)^{\hat{\otimes}n}=\left(
\begin{array}{lcc}
n+d-1\\
d-1
\end{array}
\right)$$
This completes the proof .
\end{proof}

\begin{theorem} In the basis $\mathcal{B}$, we have
\begin{eqnarray*}
\Omega_{n,\bar{i}_n\bar{j}_n}&=&\delta_{\bar{i}_n\bar{j}_n}\Big(\prod_{k_1=0}^{m_1-1}\beta_{1,k_1}\Big)^2\dots \Big(\prod_{k_r=0}^{m_r-1}\beta_{r,k_r}\Big)^2\dots\Big(\prod_{k_d=0}^{m_d-1}\beta_{d,k_d}\Big)^2\\
\alpha_{e_l|_n}e_{\bar{i}_n}&=&\alpha_{l,m_l}e_{\bar{i}_n}
\end{eqnarray*}
for all $\bar{i}_n=cl(i_1,\dots,i_n);\;\bar{j}_n=cl(j_1,\dots,j_n)\in\mathcal{A}_n$ and where $m_l=\sharp\Big(\{i_k=l;k=1,\dots,n\}\Big)$ ($1\leq l\leq d$) with the convention $\prod_{k=0}^{-1}\beta_{l,k}=1$ (this convention is used when $m_l=0$).
\end{theorem}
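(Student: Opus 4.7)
The plan is to exploit three ingredients: (i) the Gram-matrix identity (\ref{1}) expressing $\Omega_n$ as an inner product of iterated creators applied to $\Phi$; (ii) the commutativity $[a^+_j,a^+_k]=0$ established in Section 2; and (iii) the product structure of $\mu$, which, together with the tensor identification (\ref{3}), makes each $a^\varepsilon_k$ act only on the $k$-th tensor factor (Remark 3). The role of the basis $\mathcal{B}$ is that a symmetric tensor $e_{i_1}\hat\otimes\cdots\hat\otimes e_{i_n}$ depends only on the multiset $\{i_1,\dots,i_n\}$, hence only on the equivalence class $\bar{i}_n$ and the multiplicities $m_l=\#\{k:i_k=l\}$.

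To compute $\Omega_{n,\bar{i}_n\bar{j}_n}$, I pick representatives $\bar{i}_n=cl(i_1,\dots,i_n)$ and $\bar{j}_n=cl(j_1,\dots,j_n)$ and apply (\ref{1}):
$$\langle e_{\bar{j}_n},\Omega_n e_{\bar{i}_n}\rangle_{(\mathbb{C}^d)^{\hat{\otimes}n}}=\langle a^+_{j_n}\cdots a^+_{j_1}\Phi,\;a^+_{i_n}\cdots a^+_{i_1}\Phi\rangle_{\mathcal{P}_n}.$$
Using $[a^+_j,a^+_k]=0$ I regroup each product by index value; with $m_l$ (resp.\ $m'_l$) the multiplicity of $l$ in $\bar{i}_n$ (resp.\ $\bar{j}_n$), this yields $a^+_{i_n}\cdots a^+_{i_1}\Phi=(a^+_1)^{m_1}\cdots(a^+_d)^{m_d}\Phi$ and analogously for the other side. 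Writing $\Phi=1=P_{1,0}\otimes\cdots\otimes P_{d,0}$, and using (\ref{4}) together with Remark 3, an immediate induction gives
$$(a^+_1)^{m_1}\cdots(a^+_d)^{m_d}\Phi=\left(\prod_{l=1}^d\prod_{k=0}^{m_l-1}\beta_{l,k}\right)P_{1,m_1}\otimes\cdots\otimes P_{d,m_d}.$$
Since the tensor polynomials $P_{1,n_1}\otimes\cdots\otimes P_{d,n_d}$ are orthonormal with respect to $\mu=\mu_1\otimes\cdots\otimes\mu_d$, the inner product vanishes unless $m_l=m'_l$ for every $l$, that is, unless $\bar{i}_n=\bar{j}_n$, and in that case equals $\prod_l\prod_{k=0}^{m_l-1}\beta_{l,k}^2$, which is exactly the announced diagonal formula.

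For the $\alpha$-sequence I use identity (\ref{alphan}):
$$\alpha_{e_l|n}e_{\bar{i}_n}=U_n^{-1}a^0_{l|n}U_n e_{\bar{i}_n}=U_n^{-1}a^0_l\bigl(a^+_{i_n}\cdots a^+_{i_1}\Phi\bigr).$$
By the calculation above, $a^+_{i_n}\cdots a^+_{i_1}\Phi$ is a scalar multiple of $P_{1,m_1}\otimes\cdots\otimes P_{d,m_d}$, and by Remark 3 together with (\ref{4}), $a^0_l$ acts only on the $l$-th factor with eigenvalue $\alpha_{l,m_l}$. Pulling this scalar out and applying $U_n^{-1}U_n=\mathrm{id}$ gives $\alpha_{e_l|n}e_{\bar{i}_n}=\alpha_{l,m_l}e_{\bar{i}_n}$.

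The main obstacle is essentially bookkeeping rather than a genuine technical hurdle: one must carefully justify that the rearrangement $a^+_{i_n}\cdots a^+_{i_1}=(a^+_1)^{m_1}\cdots(a^+_d)^{m_d}$ is well-defined (secured by $[a^+_j,a^+_k]=0$), and that the multiplicities $m_l$ are invariants of the class $\bar{i}_n$ under $\mathcal{R}$ so that the final expressions depend only on $\bar{i}_n$ and not on the chosen representative. Both points are straightforward once the tensor/CAP dictionary of Section 2 is in place.
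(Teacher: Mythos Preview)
Your proposal is correct and follows essentially the same route as the paper: both use identity (\ref{1}) together with the commutativity of the creators to reduce $a^+_{i_n}\cdots a^+_{i_1}\Phi$ to $(a^+_1)^{m_1}\cdots(a^+_d)^{m_d}\Phi$, then invoke the tensor-product orthonormality of the $P_{l,m}$ to obtain the diagonal $\Omega_n$, and both treat $\alpha_{e_l|n}$ via (\ref{alphan}) and the fact that $a^0_l$ acts only on the $l$-th factor. Your writeup is in fact slightly more streamlined than the paper's, which separates the off-diagonal case into two sub-cases (differing index sets versus differing multiplicities) and the $\alpha$ computation into $m_l>0$ versus $m_l=0$; your use of orthonormality and the eigenvalue relation handles these uniformly.
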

\begin{proof}
Let $\bar{i}_n=cl(i_1,\dots,i_n);\;\bar{j}_n=cl(j_1,\dots,j_n)\in\mathcal{A}_n$. For all $1\leq l\leq d$ put
\begin{eqnarray}\label{(*)}
m_l&=&\sharp\Big(\{i_k=l;k=1,\dots,n\}\Big)\nonumber\\
n_l&=&\sharp\Big(\{j_k=l;k=1,\dots,n\}\Big)
\end{eqnarray}
Then one has
\begin{eqnarray}
\Omega_{n,\bar{i}_n\bar{j}_n}&=&\langle e_{\bar{i}_n},\Omega_n e_{\bar{j}_n}\rangle_{(\mathbb{C}^d)^{\hat{\otimes}n}}\nonumber\\
&=&\langle e_{i_1}\hat{\otimes}\dots\hat{\otimes}e_{i_n},\Omega_ne_{j_1}\hat{\otimes}\dots\hat{\otimes}e_{j_n}\rangle_{(\mathbb{C}^d)^{\hat{\otimes}n}}\nonumber\\
&=&\langle a_{i_1}^+\dots a_{i_n}^+\Phi,a_{j_1}^+\dots a_{j_n}^+\Phi \rangle_{\mu_1\otimes\dots\otimes \mu_d}\nonumber\\
&=&\langle (a^+_1)^{m_1}\dots (a^+_d)^{m_d}\Phi,(a^+_1)^{n_1}\dots (a^+_d)^{n_d}\Phi\rangle_{\mu_1\otimes\dots\otimes \mu_d}\label{00}
\end{eqnarray}
But for all $l\in\{1,\dots,d\}$ the creator $a^+_l$ acts only on the $l-th$ variable $X_l$. Remember that
$$P_{l,0}=\Phi=1,\:\forall l\in\{1,\dots,d\}$$
It follows that
\begin{eqnarray}
(a^+_1)^{m_1}\dots (a^+_d)^{m_d}\Phi&=&(a^+_1)^{m_1}\dots (a^+_d)^{m_d}1\nonumber\\
&=&\Big[(a^+_1)^{m_1}1\Big]\dots\Big[(a^+_d)^{m_d}1\Big]\label{11}\\
&=&\Big[(a^+_1)^{m_1}P_{1,0}\Big]\dots\Big[(a^+_d)^{m_d}P_{d,0}\Big]\nonumber
\end{eqnarray}
Identity (\ref{11}) is true because $(a^+_d)^{m_d}1$ is a polynomial in the variable $X_d$ which is of the form $(a^+_d)^{m_d}1=Q(X_d)=\prod_{k_d=0}^{m_d-1}\beta_{d,k_d}P_{d,m_d}$, so this gives
$$(a^+_{d-1})^{m_{d-1}}(a^+_d)^{m_d}1=(a^+_{d-1})^{m_{d-1}}Q(X_d)=Q(X_d) (a^+_{d-1})^{m_{d-1}}1$$
Then from (\ref{11}) one has
\begin{equation}\label{22}
(a^+_1)^{m_1}\dots (a^+_d)^{m_d}\Phi=\Big(\prod_{k_1=0}^{m_1-1}\beta_{1,k_1}\Big)\dots\Big(\prod_{k_d=0}^{m_d-1}\beta_{d,k_d}\Big)P_{1,m_1}\dots P_{d,m_d}
\end{equation}
\begin{enumerate}
\item[-] If $\bar{i}_n\neq \bar{j}_n$, then there are two cases:
$$\{i_1,\dots,i_n\}\neq \{j_1,\dots,j_n\}$$
 or there exists $l\in\{1,\dots,d\}$ such that
$$\sharp\Big(\{i_k=l,k=1,\dots,n\}\Big)\neq \sharp\Big(\{j_k=l,k=1,\dots,n\}\Big)$$
\begin{enumerate}
\item[(i)] First case: if $\{i_1,\dots,i_n\}\neq \{j_1,\dots,j_n\}$, then there exists $l\in\{1,\dots, d\}$ such that $l\in \{i_1,\dots,i_n\}$ and $l\notin\{j_1,\dots,j_n\}$ or the inverse. Without loss of generality, suppose that $l=1\in\{i_1,\dots,i_n\}$ and $l\notin\{j_1,\dots,j_n\}$. Then from identity (\ref{(*)}) it follows that $n_1=0$. Hence identity (\ref{00}) implies that
\begin{eqnarray*}
\Omega_{n,\bar{i}_n\bar{j}_n}&=&\langle (a^+_1)^{m_1}\dots (a^+_d)^{m_d}\Phi,(a^+_2)^{n_2}\dots (a^+_d)^{n_d}\Phi\rangle_{\mu_1\otimes\dots\otimes \mu_d}\\
&=&\Big(\prod_{k_1=0}^{m_1-1}\beta_{1,k_1}\Big)\langle P_{1,m_1},1\rangle_{\mu_1}\\
&&\times\langle (a^+_2)^{m_2}\dots (a^+_d)^{m_d}\Phi,(a^+_2)^{n_2}\dots (a^+_d)^{n_d}\Phi\rangle_{\mu_2\otimes\dots\otimes \mu_d}\\
&=&0
\end{eqnarray*}
because $m_1\geq1$ and $(P_{1,m})_m$ is an orthogonal family with respect to the scalar product induced by $\mu_1$.
\item[(ii)] Second case: if there exists $l\in\{1,\dots,d\}$ such that
$$\sharp\Big(\{i_k=l,k=1,\dots,n\}\Big)\neq \sharp\Big(\{j_k=l,k=1,\dots,n\}\Big)$$
Without loss of generality, suppose that $l=1$, i.e $m_1\neq n_1$. Then from (\ref{00}) one has:
\begin{eqnarray*}
\Omega_{n,\bar{i}_n\bar{j}_n}&=&\langle (a^+_1)^{m_1}\dots (a^+_d)^{m_d}\Phi,(a^+_1)^{n_1}\dots (a^+_d)^{n_d}\Phi\rangle_{\mu_1\otimes\dots\otimes \mu_d}\\
&=&\langle (a^+_1)^{m_1}\Phi, (a^+_1)^{n_1}\Phi\rangle_{\mu_1}\\
&&\times\langle (a^+_2)^{m_2}\dots (a^+_d)^{m_d}\Phi,(a^+_2)^{n_2}\dots (a^+_d)^{n_d}\Phi\rangle_{\mu_2\otimes\dots\otimes \mu_d}\\
&=&\Big(\prod_{k_1=0}^{m_1-1}\beta_{d,k_1}\Big)\Big(\prod_{l_1=0}^{n_1-1}\beta_{1,l_1}\Big)\langle P_{1,m_1}, P_{1,n_1}\rangle_{\mu_1}\\
&&\times \langle (a^+_2)^{m_2}\dots (a^+_d)^{m_d}\Phi,(a^+_2)^{n_2}\dots (a^+_d)^{n_d}\Phi\rangle_{\mu_2\otimes\dots\otimes \mu_d}\\
&=&0
\end{eqnarray*}
because $P_{1,n_1}$ is orthogonal to $P_{1,m_1}$ ($n_1\neq m_1$).
\end{enumerate}
\item[-] If $\bar{i}_n=\bar{j}_n$ then one has
\begin{eqnarray*}
\Omega_{n,\bar{i}_n\bar{i}_n}&=&\langle (a^+_1)^{m_1}\dots (a^+_d)^{m_d}\Phi,(a^+_1)^{m_1}\dots (a^+_d)^{m_d}\Phi\rangle_{\mu_1\otimes\dots\otimes \mu_d}\\
&=&\langle (a^+_1)^{m_1}\Phi,(a^+_1)^{m_1}\Phi\rangle_{\mu_1}\dots\langle (a^+_d)^{m_d}\Phi,(a^+_d)^{m_d}\Phi\rangle_{\mu_d}\\
&=&\Big(\prod_{k_1=0}^{m_1-1}\beta_{1,k_1}\Big)^2\dots\Big(\prod_{k_d=0}^{m_d-1}\beta_{d,k_d}\Big)^2
\end{eqnarray*}
\end{enumerate}

Now let $\bar{i}_n=cl\{i_1,\dots,i_n\}\in\mathcal{A}_n$. Recall that
$$U_n e_{\bar{i}_n}=U_ne_{i_1}\hat{\otimes}\dots\hat{\otimes}e_{i_n}=a^+_{i_1}\dots a^+_{i_n}\Phi$$
Then for all $l\in\{1,\dots,d\}$ one has
\begin{eqnarray*}
\alpha_{e_l|_n} e_{\bar{i}_n}&=&U_n^{-1}a^0_{l|_n}U_ne_{\bar{i}_n}\\
&=&U_n^{-1}a^0_{l|_n}a^+_{i_1}\dots a^+_{i_n}\Phi
\end{eqnarray*}
Let $m_l=\sharp\{i_k=l,\:k=1,\dots,n\}$. Then there are two cases:
\begin{enumerate}
\item[-] First case: if $m_l>0$ it follows that
\begin{eqnarray*}
a^0_{l|_n}a^+_{i_1}\dots a^+_{i_n}\Phi&=&a^0_{l|_n}\Big(\prod_{i_k\neq l}a^+_{i_k}\Big)(a^+_l)^{m_l}\Phi\\
&=&a^0_{l|_n}\Big(\prod_{h=0}^{m_l-1}\beta_{l,h}\Big)P_{l,m_l}\Big(\prod_{i_k\neq l}a^+_{i_k}\Big)\Phi\\
&=&\Big(\prod_{h=0}^{m_l-1}\beta_{l,h}\Big)\alpha_{l,m_l}P_{l,m_l}\Big(\prod_{i_k\neq l}a^+_{i_k}\Big)\Phi\\
&=&\alpha_{l,m_l}a^+_{i_1}\dots a^+_{i_n}\Phi
\end{eqnarray*}
This implies that
$$\alpha_{e_l|_n} e_{\bar{i}_n}=\alpha_{l,m_l}U_n^{-1}a^+_{i_1}\dots a^+_{i_n}\Phi=\alpha_{l,m_l}e_{\bar{i}_n}$$
\item[-] Second case: if $m_l=0$ then one has
\begin{eqnarray*}
\alpha_{e_l|_n} e_{\bar{i}_n}&=&U_n^{-1}a^0_{l|_n}a^+_{i_1}\dots a^+_{i_n}\Phi\\
&=&U_n^{-1}a^+_{i_1}\dots a^+_{i_n}a^0_{l|_n}\Phi\\
&=&\alpha_{l,0}U_n^{-1}a^+_{i_1}\dots a^+_{i_n}\Phi=\alpha_{l,0}e_{\bar{i}_n}
\end{eqnarray*}
\end{enumerate}
\end{proof}

\bigskip
{\bf\large Acknowledgments}\bigskip

The authors gratefully acknowledge stimulating discussions with Luigi Accardi and would like to thank him for reading the paper and for interesting comments.

\end{document}